\numberwithin{equation}{section}
\newcommand{\qtq}[1]{\quad\text{#1}\quad}
\theoremstyle{definition}
\newtheorem{definition}{Definition}
\theoremstyle{plain}
\newtheorem{theorem}[definition]{Theorem}
\newtheorem*{theorem*}{Theorem}
\newtheorem{lemma}{Lemma}
\newtheorem{proposition}[definition]{Proposition}
\newtheorem{corollary}[definition]{Corollary}
\theoremstyle{remark}
\newtheorem*{claim*}{Claim}
\newtheorem{remark}[]{Remark}
\newtheorem*{remark*}{Remark}
\newtheorem*{example*}{Example}
\newcommand{\eps}{\varepsilon}
\DeclareMathOperator{\R}{\mathbb{R}}
\DeclareMathOperator{\bigO}{\mathcal{O}}
\DeclareMathOperator{\F}{\mathcal{F}}
\DeclareMathOperator{\K}{\mathcal{K}}
\DeclareMathOperator{\M}{\mathcal{M}}
\DeclareMathOperator{\D}{\mathcal{D}}
\newcommand{\bigo}{\mathcal{O}}
\newcommand{\jbrak}[1]{\langle#1\rangle}
\renewcommand{\d}{\mathrm{d}}
\renewcommand{\:}{\colon}
\newcommand{\bbar}{\overline}
\renewcommand{\hat}{\widehat}
\renewcommand{\d}{\mathrm{d}}
\newcommand{\dx}{\, \mathrm{d}x}
\newcommand{\dr}{\, \d r}
\newcommand{\ds}{\, \mathrm{d}s}
\newcommand{\cD}{\mathcal{D}}
\newcommand{\cE}{\mathcal{E}}
\newcommand{\cX}{\mathscr{X}}
\newcommand{\cW}{\mathcal{W}}
\newcommand{\cN}{\mathcal{N}}
\begin{document}
    \title[Modified Scattering]{Modified Scattering for the Schr\"odinger--Bopp--Podolsky Equation}
    \author[T. Van Hoose]{Tim Van Hoose}
    \address{Department of Mathematics, University of North Carolina, Chapel Hill}
    \email{tvh@unc.edu}
    \begin{abstract}
        We prove sharp $L^\infty$ decay and modified scattering for the Schr\"odinger--Bopp--Podolsky equation in $2$ and $3$ spatial dimensions with small initial data chosen from a weighted Sobolev space. 
    \end{abstract}
    \maketitle
    
    \section{Introduction}
    We will consider long time behavior of small solutions to the so-called Schr\"odinger--Bopp--Podolsky model:
    \begin{equation}\label{E:SBP}
        \begin{cases}
            i\partial_t u + \Delta u = (\K \ast |u|^2)u - |u|^{\frac{2}{d}}u \\
            u|_{t = 0} = u_0(x) 
        \end{cases}\text{on } \R \times \R^d, \ d \in \{2, 3\}
    \end{equation}
    where the convolution kernel $\K$ is the Bopp--Podolsky potential:
    \begin{equation}
        \K(x) = \frac{1-e^{-|x|}}{|x|}
    \end{equation}
    Our aim is to establish a modified scattering result for this model using techniques from \cite{hayashiAsymptoticsLargeTime1998a} in the weighted Sobolev space $H^{\gamma_d, \gamma_d}(\R^d)$, defined by the norm 
    $\|u\|_{H^{\gamma_d, \gamma_d}} = \|\jbrak{\nabla}^{\gamma_d} u\|_{L^2} + \| |x|^{\gamma_d} u\|_{L^2}$. 
    Our main result is the following:
    \begin{theorem}\label{T:MainTheorem}
        Let $d\in \{2, 3\}$ and $\gamma_d \in (\frac{d}{2}, 1+\frac{2}{d})$ defined by \eqref{E:gammad_defn}, and let $u_0 \in H^{\gamma_d, \gamma_d}(\R^d)$ satisfy $\|u_0\|_{H^{\gamma_d, \gamma_d}(\R^d)} = \eps > 0$. For $\eps$ sufficiently small, there exists a unique solution $u(t, x) \in C_t^0 H_x^{\gamma_d, \gamma_d}([0, \infty) \times \R^d)$ to \eqref{E:SBP} with $u(0, x) = u_0(x)$. Furthermore, the solution obeys 
        \begin{equation}
            \|u(t)\|_{L^\infty} \lesssim \eps(1+|t|)^{-\frac{d}{2}}
        \end{equation}
        for all $t \geq 0$. There also exists $\cW \in L^\infty$ so that
        \begin{equation}
            \begin{multlined}
                u(t, x) = (2it)^{-\frac{d}{2}} e^{i|x|^2/4t}\left[\exp\left\{-\frac{i}{2}(\K \ast |\cW|^2 - |\cW|^{\frac{2}{d}})\left(\frac{x}{2t}\right)\right\}\cW\left(\frac{x}{2t}\right)\right] \\+ \mathcal{O}(t^{-\frac{d}{2}-\frac{1}{50} + (1+\frac{2}{d})\eps^2})
            \end{multlined}    
        \end{equation} 
    in $L^\infty(\R^d)$ as $t \to \infty$. 
    \end{theorem}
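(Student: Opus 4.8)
The plan is to adapt the argument of Hayashi and Naumkin \cite{hayashiAsymptoticsLargeTime1998a}, whose two model problems --- the $L^2$-critical power NLS and the Coulomb Hartree equation --- are exactly the two pieces of the nonlinearity $N(u) := (\K\ast|u|^2)u - |u|^{2/d}u$. The scaffolding is the Schr\"odinger vector-field calculus: set $\mathcal{U}(t) = e^{it\Delta}$, $\mathcal{M}(t) = e^{i|x|^2/4t}$, $(\mathcal{D}(t)g)(x) = (2it)^{-d/2}g(x/2t)$, use the factorization $\mathcal{U}(t) = \mathcal{M}(t)\mathcal{D}(t)\mathcal{F}\mathcal{M}(t)$, and define $J(t) = \mathcal{U}(t)\,x\,\mathcal{U}(-t) = x + 2it\nabla = 2it\,\mathcal{M}(t)\nabla\mathcal{M}(-t)$, so that $|J(t)|^{\gamma_d} = (2t)^{\gamma_d}\mathcal{M}(t)|\nabla|^{\gamma_d}\mathcal{M}(-t)$ and the weighted half of the $H^{\gamma_d,\gamma_d}$-norm is a conjugated fractional derivative. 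Local well-posedness in $C^0_t H^{\gamma_d,\gamma_d}_x$ follows from a contraction on a short interval: the equation is $L^2$-critical in the power term and $\gamma_d < 1 + \tfrac2d < 2$, so Strichartz or $L^2$-based estimates, commuting $J(t)$ and $\langle\nabla\rangle^{\gamma_d}$ through Duhamel, suffice, and global existence will come from the a priori bounds. I work throughout with the profile $\phi(t) = \mathcal{U}(-t)u(t)$, noting $\||J(t)|^{\gamma_d}u(t)\|_{L^2} = \||x|^{\gamma_d}\phi(t)\|_{L^2}$, $\widehat{\phi}(t,\xi) = e^{it|\xi|^2}\widehat{u}(t,\xi)$, and that $\|u(t)\|_{L^2}$ is conserved by gauge invariance.

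The global estimates are organized as a bootstrap for
\[
X(T) = \sup_{0\le t\le T}\Big[\|u(t)\|_{H^{\gamma_d}} + (1+t)^{-(1+2/d)\eps^2}\,\big\||J(t)|^{\gamma_d}u(t)\big\|_{L^2} + (1+t)^{d/2}\|u(t)\|_{L^\infty}\Big],
\]
aiming to improve $X(T)\le A\eps$ to $X(T)\le \tfrac12 A\eps$ for $\eps$ small. The $L^\infty$-decay is extracted from the factorization: $\|u(t)\|_{L^\infty}\le(2t)^{-d/2}\big(\|\widehat{\phi}(t)\|_{L^\infty} + \|(\mathcal{M}(t)-1)\phi(t)\|_{H^{0,\sigma}}\big)$ for $\sigma>d/2$, and $|\mathcal{M}(t)-1|\lesssim(|x|^2/t)^{\theta}$ turns the last term into $t^{-\theta}\|\phi(t)\|_{H^{0,\sigma+2\theta}}$; since $\gamma_d>d/2$ one picks $\sigma,\theta>0$ with $\sigma+2\theta\le\gamma_d$, making it $\lesssim t^{-\theta}\eps(1+t)^{(1+2/d)\eps^2} = o(\eps)$, so the decay reduces to bounding $\|\widehat{\phi}(t)\|_{L^\infty}$, which I postpone. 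The energy and weighted norms are propagated by differentiating $\|u\|_{H^{\gamma_d}}^2$ and $\||J(t)|^{\gamma_d}u\|_{L^2}^2$ in time and estimating $\langle\nabla\rangle^{\gamma_d}N(u)$ and $|J(t)|^{\gamma_d}N(u)$ in $L^2$ via: a fractional Leibniz rule; a fractional chain rule for $z\mapsto|z|^{2/d}z$, which is only $C^{1,\min(1,2/d)}$ --- here the hypothesis $\gamma_d<1+\tfrac2d$ is exactly what makes this map differentiable to order $\gamma_d$ in the fractional sense; and boundedness of the Hartree operator through the splitting $\K = \tfrac1{|\cdot|}-\tfrac{e^{-|\cdot|}}{|\cdot|}$ (Hardy--Littlewood--Sobolev for the singular tail, with weights transferred across the convolution via $x(\K\ast g) = \K\ast(xg)+((\cdot)\K)\ast g$ and noting $|x|^{\alpha}\K(x)$ is milder; $\tfrac{e^{-|\cdot|}}{|\cdot|}\in L^1$ for the rest). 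Every term is $\lesssim(1+t)^{-1}\|u(t)\|_{L^\infty}^{2/d}(\cdots)$, so Gr\"onwall yields the $(1+t)^{(1+2/d)\eps^2}$ growth of the weighted norm, with the energy norm carried in the same budget (or improved using $L^2$-conservation and $\K>0$).

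Next I derive the profile ODE. Inserting the leading-order identity $u(t)\approx\mathcal{M}(t)\mathcal{D}(t)\widehat{\phi}(t)$ into $i\partial_t\widehat{\phi}(t,\xi) = e^{it|\xi|^2}\widehat{N(u)}(t,\xi)$ and doing the dilation bookkeeping --- the crucial computation being that, with $|u(t,y)|^2\approx(2t)^{-d}|\widehat{\phi}(t,y/2t)|^2$, a change of variables gives $(\K\ast|u|^2)(x) = \tfrac1{2t}\big(\tfrac1{|\cdot|}\ast|\widehat{\phi}(t)|^2\big)(x/2t) + \mathcal{O}(t^{-d})$ (the $e^{-|\cdot|}$ tail and the finite-time discrepancy of $\K$ are both $\mathcal{O}(t^{-d})$ because $\widehat{\phi}$ is H\"older continuous by $\gamma_d>d/2$), and likewise $|u|^{2/d}u\approx\mathcal{M}\mathcal{D}\big[(2t)^{-1}|\widehat{\phi}|^{2/d}\widehat{\phi}\big]$ --- yields
\[
i\partial_t\widehat{\phi}(t,\xi) = \frac{1}{2t}\Big[\big(\tfrac1{|\cdot|}\ast|\widehat{\phi}(t)|^2\big)(\xi) - |\widehat{\phi}(t,\xi)|^{2/d}\Big]\widehat{\phi}(t,\xi) + E(t,\xi),
\]
where $E$ gathers the $(\mathcal{M}-1)$-corrections, the kernel-approximation error, and $u - \mathcal{M}\mathcal{D}\widehat{\phi}$. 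Using the bootstrap bounds and trading a slice of the $|J(t)|^{\gamma_d}$-control for extra time decay (a non-stationary-phase/interpolation gain in $\xi$), one obtains $\|E(t)\|_{L^\infty}\lesssim\eps(1+t)^{-1-1/50+(1+2/d)\eps^2}$ --- integrable in $t$ with margin, which is the source of the error exponent in the theorem. Since the bracket is real, $\partial_t|\widehat{\phi}| = \mathcal{O}(\|E\|)$, so $\|\widehat{\phi}(t)\|_{L^\infty}\lesssim\|\widehat{\phi}(1)\|_{L^\infty}+\int_1^\infty\|E\|\lesssim\eps$; this closes the $L^\infty$-decay, hence the bootstrap and global existence. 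Finally, setting $B(t,\xi) = \tfrac12\int_1^t s^{-1}\big[\big(\tfrac1{|\cdot|}\ast|\widehat{\phi}(s)|^2\big)(\xi) - |\widehat{\phi}(s,\xi)|^{2/d}\big]\,ds$ and $\psi(t) = e^{iB(t)}\widehat{\phi}(t)$ kills the main term, so $\partial_t\psi = \mathcal{O}(\|E\|)\in L^1_t$ and $\psi(t)\to\cW$ in $L^\infty$; replacing $|\widehat{\phi}(s)|$ by $|\cW|$ inside $B$ (another integrable error) and substituting $u(t)\approx\mathcal{M}(t)\mathcal{D}(t)[e^{-iB(t)}\psi(t)]$, then collecting all errors at size $t^{-d/2-1/50+(1+2/d)\eps^2}$, produces the asymptotic expansion in the statement with the correction phase read off from the non-oscillatory coefficient of $N$ along $\xi = x/2t$.

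The step I expect to be the main obstacle is the weighted estimate on the nonlocal term $(\K\ast|u|^2)u$, which must hold (a) at fractional regularity $\gamma_d<2$ --- where $J(t)$ fails to commute with the convolution and the $1/|x|$ tail forces a delicate interplay of Hardy--Littlewood--Sobolev with the fractional chain rule for $|u|^{2/d}u$ driven to its ceiling $1+\tfrac2d$ --- while (b) losing at most $t^{(1+2/d)\eps^2}$ in the weighted norm and (c) leaving every remainder in the profile equation strictly sub-$t^{-1}$, with the explicit $t^{-1/50}$ of slack. Simultaneously achieving the sharp loss-free $t^{-d/2}$ decay, the slow $\eps^2$-growth of the weighted norm, and an honestly integrable remainder is the technical core of the argument.
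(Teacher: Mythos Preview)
Your strategy coincides with the paper's: local theory in $H^{\gamma_d,\gamma_d}$ via Strichartz and a persistence argument for $|J|^{\gamma_d}$, a bootstrap pairing the slowly growing weighted norm $\||J|^{\gamma_d}u\|_{L^2}$ with $\|\widehat{\phi}(t)\|_{L^\infty}$, and an approximate ODE for the profile solved by a unimodular integrating factor. The energy step you outline matches the paper's energy proposition (the paper handles the nonlocal term simply by $\K\le|\cdot|^{-1}$ and then quotes the Hartree estimate of Hayashi--Naumkin, which is the clean way to execute the ``main obstacle'' you flag), and your remainder control on the profile equation corresponds to the paper's $I_1$--$I_4$ decomposition.

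The one substantive discrepancy is the phase you extract. After rescaling you correctly obtain the dilated kernel $\K_t(w)=\tfrac{1-e^{-2t|w|}}{|w|}$, and you then pass to its pointwise limit $|\cdot|^{-1}$, so that your integrating factor and final asymptotic are built from $|\cdot|^{-1}\ast|\widehat{\phi}|^2$. The paper instead keeps $\K$ in the integrating factor $B(t)$ and pushes the mismatch $\K_t-\K$ into the error term $I_2$; the stated asymptotic accordingly has $\K\ast|\cW|^2$ in the phase. These two choices differ by $\tfrac{e^{-|\cdot|}}{|\cdot|}\ast|\cW|^2$, a bounded but generically nonzero function; multiplied by the $\log t$ coming from $\int_1^t s^{-1}\,\d s$, this difference is neither absorbable into the $\mathcal O(t^{-d/2-1/50+\cdots})$ remainder nor into a $t$-independent redefinition of $\cW$. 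As written, your derivation therefore does not reproduce the formula in the statement. To align with the paper you must build $B(t)$ with $\K$ rather than $|\cdot|^{-1}$ and estimate $(\K_t-\K)\ast|\F\M f|^2$ as part of $I_2$; be aware that $\K_t-\K\to \tfrac{e^{-|\cdot|}}{|\cdot|}$ does \emph{not} tend to zero, so this piece has to be handled together with the $\F\M f\to\hat f$ replacement rather than as a standalone kernel error.
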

    The model \eqref{E:SBP} comes from the model 
    \begin{equation*}
        \begin{cases}
            i\partial_t u + \Delta u = V(x)u - |u|^{\frac{2}{d}} u \\
            -\Delta V + \Delta^2 V = |u|^2 \\
            u|_{t=0}=u_0(x)
        \end{cases}\text{on } \R \times \R^3
    \end{equation*} 
    Indeed, the equation for $V$ can be solved directly by taking the convolution of $|u|^2$ with the Green's function for $-\Delta + \Delta^2$. In three spatial dimensions one may verify our claim using the Fourier transform and some contour integration. \par
    
    The Bopp--Podolsky model of electrodynamics was introduced by Bopp and Podolsky (\cite{boppLineareTheorieElektrons1940b}, \cite{podolskyGeneralizedElectrodynamicsPart1942}) independently as a way to remove certain nonphysical divergences arising in classical electrodynamics. In short, classical electrodynamics suggests that the electrostatic potential for a charge should be governed by Poisson's equation $-\Delta \phi = \rho$. For a point charge (for example) $\rho = \delta_0$ and the fundamental solution to this equation is $\phi(x) \sim |x|^{-1}$. The energy for the particle is (proportional to) the $\dot{H}^1(\R^3)$-norm of $\phi$, which is clearly infinite in this case. \par
    However, upon replacing the Poisson equation $-\Delta\phi = \rho$ with the Bopp--Podolsky equation $-\Delta\phi +\Delta^2 \phi = \rho$ (and modifying the energy in the natural way), one can check that the energy of the point charge is indeed finite. \par
    For other information about the Bopp-Podolsky model, see \cite{daveniaNonlinearSchrodingerEquation2019,gaoNonlinearSchrodingerEquation2023,heNormalizedSolutionsSchrodingerBoppPodolsky2022,zhengExistenceFiniteTime2022}. 
    Our goal in this work is to prove a global existence and modified scattering result for \eqref{E:SBP} for small initial data chosen from a suitable weighted Sobolev space. We will heavily rely on the classical approach of \cite{hayashiAsymptoticsLargeTime1998a}, as the nonlinearity in our equation fits into their framework very nicely. Indeed, our potential $\K(x)$ is $\bigo(|x|^{-1})$ as $x \to \infty$ and we chose the scattering-critical power $2/d$ for the power-type nonlinearity. Such modified scattering results have been studied in a variety of contexts and methods. We direct the interested reader to \cite{murphyReviewModifiedScattering2021} for a high-quality exposition of the differing methods, as well as \cite{ifrimTestingWavePackets2022,katoNewProofLong2010a} for two specific cases. Additionally, the classical papers \cite{ginibreLongRangeScattering1993,ginibreExistenceWaveOperators1994,hirataCauchyProblemHartree1991,hayashiScatteringTheoryHartree1998} (and others; see references) may lend some alternative viewpoints into our methods, as well as some background into the study of Schr\"odinger equations with nonlocal nonlinearities. \par
    Our method is based on the one employed in \cite{hayashiAsymptoticsLargeTime1998a}. In their paper, the proof of the global existence and sharp $L^\infty$ decay result arises as the combination of estimates of an `energy' norm and a `dispersive' norm. To estimate the energy norm, one uses some chain-rule type estimates and Gr\"onwall's inequality, while the dispersive norm is estimated by writing down an approximate ODE for the `profile' $\hat{f}(t) = \F e^{-it\Delta}u(t)$. We will employ essentially the same analysis; the only subtleties come from the inhomogeneity of the potential $\K(x)$ as compared to the Hartree potential of \cite{hayashiAsymptoticsLargeTime1998a}, which is indeed homogeneous of degree -1. For more details, see \Cref{P:higherdimDnorm}. \par
        
    The remainder of the paper is organized as follows: In \Cref{S:Notation}, we gather some notation along with useful identities and inequalities. In \Cref{SS:LocalTheory}, we prove the local well-posedness result for \eqref{E:SBP} in a suitable weighted Sobolev space. In \Cref{SS:GlobalExistence}, we prove the first part of the main theorem regarding the global existence and sharp $L^\infty$ decay. Finally, in \Cref{S:AsympBehavior}, we establish the claimed asymptotic behavior of solutions.

    \subsection*{Acknowledgements} We are grateful to Jeremy Marzuola for many fruitful discussions, as well as to Jason Murphy for bringing this problem to our attention and for helpful comments regarding the exposition. T.V.H. was supported by the UNC NSF DMS-2135998 Research Training Grant.
    
    \section{Notation and Preliminaries}\label{S:Notation}
        In this section, we introduce some notation that will be used throughout the remainder of the paper.\par
        We write $A \lesssim B$ or $B \gtrsim A$ to denote the inequality $A \leq CB$ for some constant $C > 0$, where $C$ may depend on parameters like the dimension or the indices of function spaces. If $A \lesssim B $ and $B \lesssim A$ both hold, then we write $A \sim B$. We will also make use of the standard Landau symbol $\bigO$, as well as the Japanese bracket notation $\jbrak{\cdot} := (1+|\cdot|^2)^{\frac{1}{2}}$. \par
        We use the standard Lorentz spaces $L^{p,q}(\R^d)$, specifically the fact that $|x|^{-1} \in L^{d, \infty}(\R^d)$. \par
        For our local well-posedness result (see \Cref{SS:LocalTheory}), we need the standard (endpoint) Strichartz estimates for the Schr\"odinger equation proven in \cite{taoNonlinearDispersiveEquations2006c} and \cite{keelEndpointStrichartzEstimates1998}.
        We will denote the Fourier transform by $\F[f](\xi) = \widehat{f}(\xi)$ with the normalization 
        \begin{equation*}
            \F[f](\xi) := (2\pi)^{-\frac{d}{2}} \int_{\R^d} e^{-ix \cdot\xi} f(x) \dx 
        \end{equation*}
        and inverse 
        \begin{equation}
            \F^{-1}[g](x) = \check{g}(x) := (2\pi)^{-\frac{d}{2}} \int_{\R^d} e^{ix\cdot\xi} g(\xi)\d \xi
        \end{equation}
        We will define the weighted Sobolev spaces $H^{\gamma, \nu}(\R^d)$ by the norm
        \begin{equation}
            \|u\|_{H^{\gamma, \nu}} := \|\jbrak{\nabla}^\gamma u\|_{L^2} + \| |x|^\nu u\|_{L^2}
        \end{equation}
        where as usual $\jbrak{\nabla}^\gamma := \F^{-1} \jbrak{\xi}^\gamma \F$, and we define the standard Sobolev spaces $H^s(\R^d):= H^{s, 0}(\R^d)$ in the notation above. \par
        In the usual way, we will denote the free Schr\"odinger propagator by $e^{it\Delta}$. Direct computation shows that we can decompose this operator as 
        \begin{equation}\label{E:MDFM}
            e^{it\Delta} = \M(t) \cD(t) \F \M(t)
        \end{equation}
        where 
        \begin{equation}
            \M(t)f(x) = e^{i\frac{|x|^2}{4t}} f(x) \qquad \text{and} \qquad \cD(t) = (2it)^{-\frac{d}{2}} f\left(\frac{x}{2t}\right).
        \end{equation}
        By direct computation, we see that 
        \begin{equation}
            \cD(t)^{-1} = (2i)^{d} \cD\left(\frac{1}{t}\right) 
        \end{equation}
        We will also make use of the Galilean operator $J(t):= x + 2it\nabla$. Direct computation shows that
        \begin{equation}\label{E:Jtdefn1}
            J(t) = \M(t) (2it\nabla) \M(-t).
        \end{equation}
        Indeed, we have 
        \begin{align*}
            \M(t)(2it\nabla)\M(-t) f &= e^{i\frac{|x|^2}{4t}}(2it\nabla)e^{-i\frac{|x|^2}{4t}}f \\
            &= e^{i\frac{|x|^2}{4t}}e^{-i\frac{|x|^2}{4t}} (x+2it\nabla)f \\
            &= J(t)f.
        \end{align*}
        An ODE argument furnishes the identity 
        \begin{equation}\label{E:Jtdefn2}
            J(t) = e^{it\Delta}xe^{-it\Delta}
        \end{equation}
        Indeed, both sides of the equation match at $t=0$. If we take a time derivative on both sides, we find that 
        \begin{align*}
            J'(t) &= 2i\nabla \\
            \frac{d}{dt} [e^{it\Delta} x e^{-it\Delta}] &= i e^{it\Delta} [\Delta, x] e^{-it\Delta}
        \end{align*}
        where $[\cdot, \cdot]$ is the usual operator commutator. One can check directly that $[\Delta, x] = 2\nabla$. Since Fourier multipliers commute and $(e^{it\Delta})_{t \in \R}$ is a semigroup, we see 
        \begin{equation*}
            i e^{it\Delta} [\Delta, x] e^{-it\Delta} = 2i\nabla
        \end{equation*} 
        Since the time derivatives match for all $t$ and the two expressions have matching values at $t=0$, ODE uniqueness implies that $J(t) = e^{it\Delta}x e^{-it\Delta}$ for all $t$, which was what we claimed. 
        \begin{remark*}
            Since $e^{it\Delta}: L^2 \to L^2$ is unitary, we see that 
            \begin{equation*}
                \| J(t)u\|_{L^2} = \|x e^{-it\Delta}u\|_{L^2}.
            \end{equation*}
        \end{remark*}
        We also define powers of $J(t)$ in the following manner:
        \begin{align}
            |J|^{\gamma}(t) &:= \M(t)(-4t^2 \Delta)^{\frac{\gamma}{2}}\M(-t) \quad \text{for }\gamma\in[0, \infty) \label{E:Jtpower}\\
            &= e^{it\Delta}|x|^\gamma e^{-it\Delta} \label{E:Jtpower2}
        \end{align}
        Next, we record some of the regularity properties of $\K(x)$ that will be of use to us later.
        \begin{lemma}\label{L:Kregularity}
            Let $\K(x) : \R^d \to \R$ be defined as above. Then 
            \begin{enumerate}
                \item $\K \in L^{d, \infty}(\R^d)$ and $\K \in L^p(\R^d)$ for all $d < p \leq \infty$. 
                \item The following convolution estimate for $\K$ holds:
                \begin{equation*}
                    \|\K \ast f\|_{L^r} \lesssim \|f\|_{L^q}
                \end{equation*}
                where $q < r$ and $1-\tfrac{1}{d} \leq \tfrac{1}{q}-\tfrac{1}{r} \leq 1$.
            \end{enumerate}
        \end{lemma}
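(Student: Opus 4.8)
The plan is to exploit the explicit splitting $\K(x) = \tfrac{1}{|x|} - \tfrac{e^{-|x|}}{|x|}$ and treat the two terms separately, since each has a clear behavior: the first is exactly the (scale-invariant) Riesz-type kernel, while the second is a Schwartz-tail modification concentrated near the origin.

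For part (1), I would first observe that $|x|^{-1} \in L^{d,\infty}(\R^d)$ is the standard fact already quoted in \Cref{S:Notation}, and that $\tfrac{e^{-|x|}}{|x|}$ has the same local singularity at $0$ but decays exponentially at infinity; since $|x|^{-d}$ sits on the boundary of integrability, $\tfrac{e^{-|x|}}{|x|} \in L^{d,\infty}$ as well, and as the Lorentz spaces $L^{d,\infty}$ form a quasi-normed space the sum $\K$ lies there too. Actually it is cleaner to note that near the origin the two singularities cancel to leading order: a Taylor expansion gives $1 - e^{-|x|} = |x| + O(|x|^2)$, so $\K(x) = 1 + O(|x|)$ is in fact bounded (indeed continuous) near $0$. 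Hence the only issue is the decay at infinity, where $\K(x) \sim |x|^{-1}$. This immediately yields $\K \in L^\infty(\R^d)$ and, combined with the $|x|^{-1}$ tail, $\K \in L^p(\R^d)$ precisely for $p > d$; for the weak endpoint $\K \in L^{d,\infty}$ one checks that the distribution function $|\{|\K| > \lambda\}|$ is $O(\lambda^{-d})$, which follows since the superlevel sets are bounded (by boundedness of $\K$) for large $\lambda$ and, for small $\lambda$, are comparable to a ball of radius $\sim \lambda^{-1}$ coming from the $|x|^{-1}$ tail.

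For part (2), the natural tool is the Young-type inequality for convolution with a weak-$L^{d}$ kernel, i.e.\ the weak Young (O'Neil / Hölder in Lorentz spaces) inequality: if $\K \in L^{d,\infty}$ then $\|\K \ast f\|_{L^r} \lesssim \|\K\|_{L^{d,\infty}} \|f\|_{L^q}$ whenever $1 + \tfrac1r = \tfrac1d + \tfrac1q$ with $1 < q < r < \infty$, which is exactly the endpoint case $\tfrac1q - \tfrac1r = 1 - \tfrac1d$ of the claimed range. To obtain the full stated range $1 - \tfrac1d \le \tfrac1q - \tfrac1r \le 1$, I would interpolate (or rather, bootstrap) this with ordinary Young's inequality: since $\K \in L^p$ for every $p \in (d,\infty]$, we also have $\|\K \ast f\|_{L^r} \lesssim \|f\|_{L^q}$ whenever $1 + \tfrac1r = \tfrac1p + \tfrac1q$ for some such $p$, and letting $p$ range over $(d,\infty]$ sweeps out $\tfrac1q - \tfrac1r \in [1 - \tfrac1d, 1)$; the remaining endpoint $\tfrac1q - \tfrac1r = 1$ (corresponding to $\K \in L^1_{\mathrm{loc}}$-type behavior, i.e.\ $q=1$, $r = \infty$ morally) is handled by the $p=\infty$ case of Young together with the boundedness of $\K$. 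A clean way to present this is: decompose $\K = \K \mathds{1}_{|x|\le 1} + \K \mathds{1}_{|x| > 1}$; the first piece is in $L^q$ for all $q < d$ (it behaves like a bounded function on a bounded set, in fact it is bounded, so it is in every $L^q$ on the unit ball) and actually compactly supported hence in $L^1 \cap L^\infty$, while the second piece is bounded and decays like $|x|^{-1}$, hence lies in $L^{d,\infty} \cap L^\infty$; applying Young's inequality (classical for the $L^1$, $L^\infty$ pieces, weak Young for the $L^{d,\infty}$ piece) to each and summing the resulting estimates covers the whole exponent range.

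The main obstacle is purely bookkeeping: making sure the two endpoints of the exponent interval — the weak-type Young endpoint $\tfrac1q - \tfrac1r = 1 - \tfrac1d$ on one side and the $q = 1$ (or $r = \infty$) endpoint $\tfrac1q - \tfrac1r = 1$ on the other — are actually attained rather than merely approached, and that the constants do not blow up. The weak endpoint genuinely requires the Lorentz-space (O'Neil) refinement of Young's inequality rather than the classical one, since $\K \notin L^d$; everything else is a routine application of Young's inequality to the near-field/far-field decomposition and poses no real difficulty.
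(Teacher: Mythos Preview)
Your approach is correct and essentially identical to the paper's: boundedness of $\K$ near the origin plus the pointwise bound $\K(x)\le |x|^{-1}$ for part~(1), and classical Young for $\K\in L^p$ with $p\in(d,\infty]$ together with the weak (O'Neil) Young inequality at $p=d$ for part~(2). One bookkeeping slip to fix: classical Young with $p\in(d,\infty]$ gives $\tfrac1q-\tfrac1r=1-\tfrac1p\in(1-\tfrac1d,\,1]$, not $[1-\tfrac1d,\,1)$ as you wrote, so the endpoint needing the Lorentz refinement is $1-\tfrac1d$ (which you do identify correctly elsewhere), not $1$.
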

        \begin{proof}
            To see the first part of (1), note that we have $\K(x) \leq |x|^{-1}$ pointwise in $x$. This readily implies that $\K(x)$ is in $L^{d, \infty}(\R^d)$ because $x \mapsto |x|^{-1} \in L^{d, \infty}(\R^d)$. For the second part of the (1), we directly integrate:
            \begin{align*}
                \|\K(x)\|_{L^p}^p &= \int_{\R^d} \left(\frac{1-e^{-|x|}}{|x|}\right)^p \dx \\
                &\leq \int_{B(0, 1)} \left(\frac{1-e^{-|x|}}{|x|}\right)^p \dx + \int_{\R^d \setminus B(0, 1)} \left(\frac{1-e^{-|x|}}{|x|}\right)^p \dx \\
                &\lesssim 1 + \int_{\R^d \setminus B(0, 1)} \left(\frac{1-e^{-|x|}}{|x|}\right)^p \dx\\
                &\lesssim 1+ \int_{\R^d \setminus B(0, 1)}  |x|^{-p} \dx \\
                &\lesssim 1+ \int_{S^{d-1}}\int_1^\infty r^{d-1-p} \dr \d \Omega,
            \end{align*}    
            where from the second to the third line we used that $\K \in L^\infty$, and from the third to the fourth line we used that $\K(x) \leq |x|^{-1}$. In the final line we switched to spherical coordinates. In particular, the last integral in spherical coordinates converges if and only if $d - p < 0$, i.e. if $p > d$. \par
            To prove (2), note that by Young's convolution inequality 
            \begin{equation}\label{E:youngs}
                \|\K \ast f\|_{L^r} \lesssim \|\K\|_{L^p}\|f\|_{L^q}
            \end{equation}
            where $1+\tfrac{1}{r} = \tfrac{1}{p}+ \tfrac{1}{q}$ for any $d < p \leq \infty$. Rearranging the former equality gives $\tfrac{1}{q}-\tfrac{1}{r} = 1-\tfrac{1}{p}$. The conditions on $p$ then guarantee that $\tfrac{1}{q}-\tfrac{1}{r} \in (1-\tfrac{1}{d}, 1]$. To conclude, we note that a stronger version of Young's inequality holds, namely that we can replace the $L^p$-norm in \eqref{E:youngs} by $L^{p, \infty}$. This allows us to include $d$ in the range of $p$ for which the inequality holds, and thus include the endpoint $1-\tfrac{1}{d}$. 
        \end{proof}
        
        Another trivial estimate we will make use of in the sequel is the following consequence of the mean value theorem: 
        \begin{lemma}\label{L:mvtbound}
            For any $x \in \R \text{ and } 0\leq \alpha \leq 1$,
            \[|e^{ix}-1| \lesssim |x|^{\alpha}.\]
        \end{lemma}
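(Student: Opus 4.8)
The plan is to obtain the estimate by interpolating between two elementary bounds. First I would record the Lipschitz-type bound $|e^{ix}-1| \leq |x|$, valid for every $x \in \R$. This follows by writing $e^{ix}-1 = \int_0^x i e^{is}\,\d s$ and estimating $|e^{ix}-1| \leq \int_0^{|x|} |i e^{is}|\,\d s = |x|$, since $|e^{is}| = 1$; equivalently, it is the mean value theorem applied separately to the real and imaginary parts, or simply the fact that a chord of the unit circle is no longer than the subtended arc.

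Second, I would note the trivial global bound $|e^{ix}-1| \leq |e^{ix}| + 1 = 2$, which holds for all $x \in \R$.

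The main (and essentially only) step is then to interpolate between these. For $0 \leq \alpha \leq 1$ we split the power as
\[
|e^{ix}-1| = |e^{ix}-1|^{\alpha}\,|e^{ix}-1|^{1-\alpha} \leq |x|^{\alpha}\cdot 2^{1-\alpha} \leq 2\,|x|^{\alpha},
\]
applying the first bound to the factor with exponent $\alpha$ and the second to the factor with exponent $1-\alpha$. This yields $|e^{ix}-1| \lesssim |x|^{\alpha}$ with an implied constant ($=2$) independent of $\alpha$. There is no genuine obstacle here; the only point worth a moment's care is that the constant should not degenerate as $\alpha \to 0^+$ or $\alpha \to 1^-$, which it does not, since $2^{1-\alpha} \leq 2$ throughout $[0,1]$.
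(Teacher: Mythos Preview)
Your proof is correct and is exactly the standard argument: the paper does not spell out a proof at all, merely labeling the lemma a ``consequence of the mean value theorem,'' and your write-up is precisely how one fills in that remark---use the mean value theorem (or the integral form) for the $\alpha=1$ bound, the trivial bound $|e^{ix}-1|\le 2$ for $\alpha=0$, and interpolate.
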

        For notational convenience, for the remainder of the paper we define
        \begin{equation}\label{E:gammad_defn}
            \gamma_d = \tfrac{1}{2}+\tfrac{d^2+4}{4d} \in (\tfrac{d}2, 1+\tfrac2d),   
        \end{equation}
        noting that this interval is indeed nonempty for $d \in \{2,3\}$. If $X([0, T];Y)$ is a Bochner space of functions $[0, T]\to Y$, we will write $X_T = X([0, T])$. 
        
    \section{Proof of Main Theorem}\label{S:MTProof}
        \subsection{Local Theory}\label{SS:LocalTheory}    
        To get the bootstrap argument started, we need a local existence theory for \eqref{E:SBP}. Commuting derivatives with the nonlinearity can be handled using the algebra property of $H^{\gamma_d}$. To deal with commuting spatial weights with the nonlinearity, we use the operator $|J|^{\gamma_d}(t)$ defined by \eqref{E:Jtpower}. A similar result in this spirit can be found in \cite[Proposition 2.1]{ifrimGlobalBoundsCubic2014}.
        \begin{theorem}\label{T:localexistence}
            For any $u_0 \in H^{\gamma_d, \gamma_d}(\R^d)$, there exists $T >0$ and a unique solution $u \in C_T^0 H_x^{\gamma_d, \gamma_d}$ with $u(0, x) = u_0(x)$. Further, we have the standard blowup alternative: either $u$ is forward-global, or there exists $T_\ast >0$ such that 
            \[
              \limsup_{t \to T_\ast^{-}} \|u(t)\|_{H^{\gamma_d}} = \infty.  
            \]
        \end{theorem}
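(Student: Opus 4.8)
\noindent\emph{Proof strategy.}\ The plan is to solve the Duhamel equation
\[
    u(t) = e^{it\Delta}u_0 - i\int_0^t e^{i(t-s)\Delta}\,N(u)(s)\ds
\]
by a contraction-mapping argument, where $N(u) := (\K\ast|u|^2)u - |u|^{\frac2d}u$. I would work in a closed ball, of radius $\sim\|u_0\|_{H^{\gamma_d,\gamma_d}}$, of a space $\cX_T$ whose norm controls the $H^{\gamma_d}$-Strichartz norms of $u$ together with $\sup_{t\in[0,T]}\bigl(\|u(t)\|_{H^{\gamma_d}}+\||J|^{\gamma_d}(t)u(t)\|_{L^2}\bigr)$. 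The reason for propagating $|J|^{\gamma_d}(t)u$ rather than $|x|^{\gamma_d}u$ directly is that $|J|^{\gamma_d}(t)=e^{it\Delta}|x|^{\gamma_d}e^{-it\Delta}$ by \eqref{E:Jtpower2}, so it commutes with the free propagator; applying it to the Duhamel formula yields
\[
    |J|^{\gamma_d}(t)u(t) = e^{it\Delta}|x|^{\gamma_d}u_0 - i\int_0^t e^{i(t-s)\Delta}\,|J|^{\gamma_d}(s)N(u)(s)\ds,
\]
so the same linear estimates apply with source $|J|^{\gamma_d}(s)N(u)(s)$, and on a bounded time interval $\||x|^{\gamma_d}u(t)\|_{L^2}$ is recovered from $\||J|^{\gamma_d}(t)u(t)\|_{L^2}$ up to $H^{\gamma_d}$-controlled errors, with continuity in $t$ following from $|J|^{\gamma_d}(t)u(t)\to|x|^{\gamma_d}u_0$ as $t\to0^+$.

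The crux is the nonlinear estimate for $N(u)$, both in $H^{\gamma_d}$ and after applying $|J|^{\gamma_d}(t)$. For the Hartree piece I would use the fractional Leibniz rule, push $\jbrak{\nabla}^{\gamma_d}$ through the convolution (it commutes with $\K\ast$, both being Fourier multipliers), use the convolution bound of \Cref{L:Kregularity}(2) to trade $\K\ast$ for a gain of integrability, and close with Sobolev embedding and the fact that $H^{\gamma_d}$ is an algebra embedded in $L^\infty$ -- this is where $\gamma_d>\tfrac d2$ enters -- obtaining $\|(\K\ast|u|^2)u\|_{H^{\gamma_d}}\lesssim\|u\|_{H^{\gamma_d}}^3$. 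For the gauge-invariant power term, $F(z)=|z|^{\frac2d}z$ is $C^1$ with $F'(z)\sim|z|^{\frac2d}$ of H\"older exponent $\min(1,\tfrac2d)$, so the fractional chain rule gives $\|\jbrak{\nabla}^{\gamma_d}(|u|^{\frac2d}u)\|_{L^2}\lesssim\|u\|_{L^\infty}^{\frac2d}\|\jbrak{\nabla}^{\gamma_d}u\|_{L^2}$ precisely because $\gamma_d<1+\tfrac2d$ -- this is the role of the upper endpoint in \eqref{E:gammad_defn}. To put the weight on $N$, the key algebraic fact is $\M(-t)N(u)=N(\M(-t)u)$ (for the power term since $F$ is gauge invariant, for the Hartree term since $|\M(-t)u|=|u|$), so that by \eqref{E:Jtpower}
\[
    |J|^{\gamma_d}(t)N(u) = (2t)^{\gamma_d}\M(t)(-\Delta)^{\gamma_d/2}N(\M(-t)u) \qquad (t>0).
\]
Applying the homogeneous-Sobolev versions of the above estimates to $N(\M(-t)u)$ produces negative powers of $t$ that recombine with the $(2t)^{\gamma_d}$ prefactor -- reversing \eqref{E:Jtpower} and using that $\M(\pm t)$ is an $L^2$-isometry -- to give $\||J|^{\gamma_d}(t)N(u)\|_{L^2}\lesssim P(\|u\|_{H^{\gamma_d}})\,\||J|^{\gamma_d}(t)u\|_{L^2}+\jbrak{t}^{\gamma_d}P(\|u\|_{H^{\gamma_d}})$ for a polynomial $P$ vanishing at $0$; \Cref{L:mvtbound} enters the routine book-keeping.

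With these estimates the Duhamel map is a self-map and a contraction on the ball once $T$ is small depending only on $\|u_0\|_{H^{\gamma_d,\gamma_d}}$; since the power nonlinearity is not smooth, the contraction is carried out at low regularity (e.g.\ in an $L^2$-Strichartz metric, using $\bigl||z_1|^{\frac2d}z_1-|z_2|^{\frac2d}z_2\bigr|\lesssim(|z_1|^{\frac2d}+|z_2|^{\frac2d})|z_1-z_2|$) with the $H^{\gamma_d,\gamma_d}$ bound retained a priori and upgraded to continuity by the usual argument. For the blow-up alternative, reopening the fixed-point argument from a later time extends the solution as long as $\|u(t)\|_{H^{\gamma_d,\gamma_d}}$ stays finite; the point is that the weighted part cannot be the first to diverge, because the estimate above is \emph{linear} in $\||J|^{\gamma_d}(t)u\|_{L^2}$ with coefficient a polynomial in $\|u(t)\|_{H^{\gamma_d}}$ alone, so Gr\"onwall's inequality bounds $\sup_{[0,T_\ast)}\||J|^{\gamma_d}(t)u(t)\|_{L^2}$ in terms of $\||x|^{\gamma_d}u_0\|_{L^2}$, $T_\ast$, and $\sup_{[0,T_\ast)}\|u(t)\|_{H^{\gamma_d}}$; hence finiteness of the latter forces finiteness of the full norm on $[0,T_\ast)$. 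I expect the main obstacle to be this weighted nonlinear estimate: making rigorous the interaction of the fractional Galilean weight $|J|^{\gamma_d}(t)$ with the non-smooth power nonlinearity via the modulation identity and the fractional chain rule, and checking that the $(2t)^{\gamma_d}$ factor exactly absorbs the singular $t$-powers generated near $t=0$, all while exploiting that $\gamma_d\in(\tfrac d2,1+\tfrac2d)$ and keeping the endpoint in \Cref{L:Kregularity}(2) available.
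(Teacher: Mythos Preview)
Your proposal is correct and uses the same core ingredients as the paper: Strichartz estimates, the fractional Leibniz and chain rules (exploiting $\gamma_d\in(\tfrac d2,1+\tfrac2d)$), the convolution bound from \Cref{L:Kregularity}, and the modulation identity $|J|^{\gamma_d}(t)N(u)=\M(t)(-4t^2\Delta)^{\gamma_d/2}N(\M(-t)u)$ coming from gauge invariance. The organization differs slightly: the paper first runs the contraction purely in $L_T^\infty H^{\gamma_d}_x$ and then upgrades to the weighted space by a separate persistence-of-regularity step (choosing $T$ small depending only on $\|u\|_{L_T^\infty H^{\gamma_d}_x}$ so that $\||J|^{\gamma_d}u\|_{L_T^\infty L^2_x}$ at most doubles on each subinterval), whereas you bundle both into a single fixed-point space; either packaging works. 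For the blow-up alternative you sketch the Gr\"onwall argument yourself---linearity of the weighted estimate in $\||J|^{\gamma_d}u\|_{L^2}$ with coefficient depending only on the $H^{\gamma_d}$ norm---while the paper simply invokes \cite[Theorem~4.2]{katoNonlinearSchrodingerEquations1995}; your argument is exactly the content of that citation. One small remark: you are right to flag the low-regularity contraction for the non-smooth power $|u|^{2/3}u$ in $d=3$, a point the paper passes over by appealing loosely to the ``algebra property'' and a ``standard bootstrap.''
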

        \begin{proof}
            To obtain existence in $H^{\gamma_d}(\R^d)$, we proceed as discussed above. Our argument is a standard contraction mapping argument in the space 
            \[Y^{\gamma_d} = \{u \in L_t^\infty H_x^{\gamma_d}([0, T] \times \R^d) \mid \|u\|_{L_T^\infty H_x^{\gamma_d}} \leq 2C\|u_0\|_{H_x^{\gamma_d}}\},\] 
            where the factor $C$ encodes the constants in the inequalities we use. We clearly have that $Y^{\gamma_d}$ is a closed (hence complete) subspace of $L_T^\infty H_x^{\gamma_d}$. To begin, recast \eqref{E:SBP} as a Duhamel integral operator: 
            \begin{align}
                \Phi[u] &= e^{it\Delta}u_0 - i \int_0^t e^{i(t-s)\Delta} \left[(\K \ast |u|^2)u(s) - |u|^{\frac{2}{d}}u\right] \ds \label{E:DUHAMEL}\\
                        &:= e^{it\Delta}u_0 -i \int_0^t e^{i(t-s)\Delta} (\cN_1(u)(s) - \cN_2(u)(s)) \ds 
            \end{align}
            where we defined $\cN_1(u) = (\K \ast |u|^2)u$ and $\cN_2(u) = |u|^{\frac{2}{d}}u$. 
            
             For any $u \in Y^{\gamma_d}$, we then have (by Strichartz) 
             \begin{equation}\label{E:Phiest}
                \begin{multlined}[t]
                \|\jbrak{\nabla}^{\gamma_2} \Phi[u]\|_{L_T^\infty H_x^{\gamma_d}} \lesssim \|u_0\|_{H^{\gamma_d}} 
                      \\+\left\| \jbrak{\nabla}^{\gamma_d} \int_0^t e^{i(t-s)\Delta} (\cN_1(u)(s) - \cN_2(u)(s)) \ds\right\|_{L_T^\infty L_x^2}.                    
                \end{multlined}
            \end{equation}
            To estimate the integral term, we need to split into the $2d$ and $3d$ cases separately. \par
            In $2$ dimensions, we have that $L_{t, x}^{\frac{4}{3}}$ and $L_t^1 L_x^2$ are both dual admissible Strichartz spaces. This gives the estimate 
            \begin{multline*}
                \left\| \jbrak{\nabla}^{\gamma_d} \int_0^t e^{i(t-s)\Delta} (\cN_1(u)(s) - \cN_2(u)(s)) \ds\right\|_{L_T^\infty L_x^2} \lesssim \| \jbrak{\nabla}^{\gamma_2} \cN_1(u)\|_{L_{T,x}^{\frac{4}{3}}} \\+ \| \jbrak{\nabla}^{\gamma_2} \cN_2(u)\|_{L_T^1 L_x^2}.
            \end{multline*}
            By the algebra property of $H^{\gamma_2}(\R^2)$ and H\"older in time, the latter term on the right-hand side is bounded by 
            \begin{equation}\label{E:N2est2d}
                \|\jbrak{\nabla}^{\gamma_2} \cN_2(u)\|_{L_T^1 L_x^2} \lesssim T\|u\|_{L_T^\infty H_x^{\gamma_2}}^2,
            \end{equation}
            which is acceptable (note that in this case we used that $1+\tfrac{2}{d} = 2$ when $d =2$). For $\cN_1$, we use the fractional product rule and commutativity of Fourier multipliers to bound 
            \begin{align*}
                \| \jbrak{\nabla}^{\gamma_2}\cN_1(u)\|_{L_{T,x}^{\frac{4}{3}}} &\lesssim \| (\K \ast |u|^2)u\|_{L_{T,x}^{\frac{4}{3}}} +\| \K \ast |u|^2\|_{L_T^\infty L_x^4} \| |\nabla|^{\gamma_2}u\|_{L_T^{\frac{4}{3}}L_x^2} \\ &\quad+ \| \K \ast (|\nabla|^{\gamma_2} |u|^2)\|_{L_T^\infty L_x^4} \|u\|_{L_T^{\frac{4}{3}}L_x^2}.
            \end{align*}
            The terms featuring copies of only $u$ or derivatives of $u$ are controlled by $T^{\frac{3}{4}} \|u\|_{L_T^\infty H_x^{\gamma_d}}$ by applying H\"older's inequality in time and Sobolev embedding, which is acceptable. It remains to handle the terms containing convolutions, which we do with the aid of the second part of \Cref{L:Kregularity}. To wit, by H\"older and \Cref{L:Kregularity},
            \begin{align*}
                \| (\K \ast |u|^2) u\|_{L_{T, x}^{\frac{4}{3}}} &\lesssim \| u\|_{L_T^{\frac{4}{3}}L_x^2} \| \K \ast |u|^2\|_{L_T^\infty L_x^4} \\
                &\lesssim T^{\frac{3}{4}}\|u\|_{L_T^\infty L_x^2} \| |u|^2\|_{L_T^\infty L_x^1} \\
                &\lesssim T^{\frac{3}{4}} \|u\|_{L_T^\infty H_x^{\gamma_2}}^3.          
            \end{align*}
            The above analysis also implies 
            \begin{align*}
                \|\K \ast |u|^2\|_{L_T^\infty L_x^4} \lesssim \|u\|_{L_T^\infty H_x^{\gamma_2}}^2,
            \end{align*}
            which is acceptable. For the term with derivatives, we have 
            \begin{align*}
                \| \K \ast |\nabla|^{\gamma_2}|u|^2\|_{L_T^\infty L_x^4} &\lesssim \| |\nabla|^{\gamma_2} |u|^2\|_{L_T^\infty L_x^{\frac{4}{3}}}\\
                &\lesssim \| u\|_{L_T^\infty \dot{H}_x^{\gamma_2}} \|u\|_{L_T^\infty L_x^4} \\
                &\lesssim \|u\|_{L_T^\infty H_x^{\gamma_2}}^2
            \end{align*}
            where the final inequality follows from inhomogeneous Sobolev embedding and the trivial embedding $H^s \hookrightarrow \dot{H}^s$. Combining all the estimates together, we see that 
            \begin{equation}\label{E:N1est2d}
                \| \jbrak{\nabla}^{\gamma_2}\cN_1(u)\|_{L_{T, x}^{\frac{3}{4}}} \lesssim T^{\frac{4}{3}} \|u\|_{L_T^\infty H_x^{\gamma_2}}^3,
            \end{equation}
            and plugging \eqref{E:N2est2d} and \eqref{E:N1est2d} into \eqref{E:Phiest}, a standard bootstrap argument in the space $Y^{\gamma_2}$ gives local existence of $H^{\gamma_d}$ solutions to \eqref{E:SBP} in $2d$. \par
            In $3d$, we proceed in much the same way as in $2$ dimensions. Indeed, starting with \eqref{E:Phiest}, we recognize $L_t^2 L_x^6$ and $L_t^\infty L_x^2$ as admissible pairs in $3d$ (the former space is an allowable endpoint; see \cite{keelEndpointStrichartzEstimates1998}), so we see that we need to estimate (by Strichartz)
            \[
              \| \jbrak{\nabla}^{\gamma_3} [(\K \ast |u|^2)u]\|_{L_T^2 L_x^{\frac{6}{5}}}  \qtq{and} \| \jbrak{\nabla}^{\gamma_3} |u|^{\frac{2}{3}}u\|_{L_T^1 L_x^2}.
            \]
            Again using the algebra property of $H^{\gamma_3}(\R^3)$, the second term can be bounded by 
            \begin{equation*}
                \| \jbrak{\nabla}^{\gamma_3} |u|^{\frac{2}{3}}u\|_{L_T^1 L_x^2}\lesssim T \|u\|_{L_T^\infty H_x^{\gamma_3}}^{\frac{5}{3}},
            \end{equation*}
            which is acceptable. To bound the contribution of the convolution nonlinearity, we use the fractional product rule to write 
            \begin{align*}
                \| \jbrak{\nabla}^{\gamma_3} [(\K \ast |u|^2)u]\|_{L_T^2 L_x^{\frac{6}{5}}} &\lesssim \| (\K \ast |u|^2)u\|_{L_T^2 L_x^{\frac{6}{5}}} + \| |\nabla|^{\gamma_3}u\|_{L_T^2 L_x^2} \| \K \ast |u|^2\|_{L_T^\infty L_x^3} \\ &\quad+ \| \K \ast |\nabla|^{\gamma_3} |u|^2\|_{L_T^\infty L_x^6} \|u\|_{L_T^2 L_x^{\frac{3}{2}}}.
            \end{align*}
            In this case, we need to Sobolev embed to control $\|u\|_{L_T^2 L_x^{\frac{3}{2}}} \lesssim T^{\frac{1}{2}}\|u\|_{L_T^\infty H_x^{\gamma_3}}$. The other term containing derivatives of $u$ can also be controlled by $T^{\frac{1}{2}} \|u\|_{L_T^\infty H_x^{\gamma_3}}$. The remaining terms that contain convolutions can be estimated by using \Cref{L:Kregularity}, Sobolev embedding and H\"older's inequality. Indeed, we have 
            \begin{align*}
                \|(\K \ast |u|^2)u\|_{L_T^2 L_x^{\frac{6}{5}}} &\lesssim \|u\|_{L_{T,x}^2} \| \K \ast |u|^2\|_{L_T^\infty L_x^3}\\
                &\lesssim T^{\frac{1}{2}}\|u\|_{L_T^\infty L_x^2}\| |u|^2\|_{L_T^\infty L_x^1} \\
                &\lesssim T^{\frac{1}{2}} \|u\|_{L_T^\infty L_x^2}^3\\
                &\lesssim T^{\frac{1}{2}}\|u\|_{L_T^\infty H_x^{\gamma_3}}^3
            \end{align*}
            The proof of this estimate directly implies the estimate 
            \begin{equation*}
                \| \K \ast |u|^2 \|_{L_T^\infty L_x^3} \lesssim \|u\|_{L_T^\infty H_x^{\gamma_3}}^2.
            \end{equation*}
            For the terms with derivatives, we have 
            \begin{align*}
                \| \K \ast |\nabla|^{\gamma_3} |u|^2\|_{L_T^\infty L_x^6} &\lesssim \| |\nabla|^{\gamma_3} |u|^2\|_{L_T^\infty L_x^{\frac{6}{5}}} \\
                &\lesssim \| |\nabla|^{\gamma_3} |u|^2\|_{L_T^\infty L_x^{\frac{6}{5}}} \\
                &\lesssim \| u\|_{L_T^\infty L_x^3}\||\nabla|^{\gamma_3}u\|_{L_T^\infty L_x^2}\\
                &\lesssim \|u\|_{L_T^\infty H_x^{\gamma_3}}^2.
            \end{align*}
            This tells us that $\|\jbrak{\nabla}^{\gamma_3}[(\K \ast |u|^2)u]\|_{L_T^2 L_x^{\frac{6}{5}}} \lesssim T^{\frac{1}{2}}\|u\|_{L_T^\infty H_x^{\gamma_d}}$. The same bootstrap argument as in the $2d$ case gives local existence in $H^{\gamma_3}(\R^3)$, as desired. \par
            Next, we want to upgrade from $H^{\gamma_d}(\R^d)$ initial data to data in the weighted Sobolev space $H^{\gamma_d, \gamma_d}(\R^d)$. To make this happen, we will use a persistence of regularity argument, which will show that for a solution to \eqref{E:SBP} with data in $H^{\gamma_d}$ on an interval $I = [0, T]$, the quantity 
            \[\| |J|^{\gamma_d}(t)u(t)\|_{L_t^\infty L_x^2 (I \times \R^d)}\] is bounded (and in particular cannot blow up). To this end, we'll use the representation \eqref{E:DUHAMEL} and the same Strichartz estimates from above. Assuming that our datum $u_0 \in H^{\gamma_d, \gamma_d}(\R^d)$ and using \eqref{E:Jtpower2} and Strichartz, we have 
            \begin{multline}\label{E:persistence}
                \| |J|^{\gamma_d}(t) u(t)\|_{L_t^\infty L_x^2(I\times \R^d)} \lesssim \| |J|^{\gamma_d} e^{it\Delta} u_0\|_{L_t^\infty L_x^2} \\+ \| |J|^{\gamma_d}(t) [(\K \ast |u|^2)u]\|_{A_d} + \| |J|^{\gamma_d}(t) |u|^{\frac{2}{d}}u\|_{L_T^1 L_x^2}.
            \end{multline}
            where $A_{d=2}$ = $L_{T, x}^{\frac{4}{3}}$ in dimension $2$ and $A_{d=3}$ = $L_T^2 L_x^{\frac{6}{5}}$ in dimension $3$. 
            By using the representation \eqref{E:Jtpower} and the fact that 
            \[
              e^{i\phi} \cN_j(u) = \cN_j(e^{i\phi}u)  \qtq{for} j = 1,2, \, \phi \in \R
            \]
            (i.e. that we have gauge invariant nonlinearities), we see that if we set $v = M(-t) u$, what we really have to understand is the quantities
            \begin{align}
                &\| (-4t^2 \Delta)^{\frac{\gamma_2}{2}} [(\K \ast |v|^2) v]\|_{L_{T, x}^{\frac{4}{3}}}, \label{E:T1}\\
                &\|(-4t^2 \Delta)^{\frac{\gamma_3}{2}} [(\K \ast |v|^2) v]\|_{L_T^2 L_x^{\frac{6}{5}}},\text{ and }  \label{E:T2}\\
                &\| (-4t^2\Delta)^{\frac{\gamma_d}{2}} |v|^{\frac{2}{d}}v\|_{L_T^1 L_x^2} \label{E:T3}.
            \end{align}
            However, the estimates of the derivatives of these quantities from earlier apply out of the box to show that each of them can be controlled in terms of $\|u_0\|_{H^{\gamma_d, \gamma_d}}^\theta$ for some power $\theta$. To be completely explicit, we have 
            \begin{equation}
                \eqref{E:T1} \lesssim T^{\frac{4}{3}}\|u\|_{L_T^\infty H_x^{\gamma_2}}^2 \||J|^{\gamma_2}u\|_{L_T^\infty L_x^2},
            \end{equation}
            \begin{equation}
                \eqref{E:T2} \lesssim T^{\frac{1}{2}} \|u\|_{L_T^\infty H_x^{\gamma_3}}^2 \| |J|^{\gamma_3}u\|_{L_T^\infty L_x^2},
            \end{equation}    
            and
            \begin{equation}
                \eqref{E:T3} \lesssim T \|u\|_{L_T^\infty H_x^{\gamma_d}}^\frac{2}{d}\| |J|^{\gamma_d}u\|_{L_T^\infty L_x^2}.
            \end{equation}
            Now if we insert these estimates back into \eqref{E:persistence} and choose $T$ small enough depending only on $\|u\|_{L_T^\infty H_x^{\gamma_d}}$, we see that we can guarantee 
            \begin{equation}
                \| |J|^{\gamma_d} u\|_{L_T^\infty L_x^2} \lesssim \| |x|^{\gamma_d} u_0\|_{L_x^2} + \frac{1}{2}\| |J|^{\gamma_d} u\|_{L_T^\infty L_x^2} 
            \end{equation}
            which implies that on the interval $[0, T]$ the quantity $\| |J|^{\gamma_d} u\|_{L_T^\infty L_x^2}$ at most doubles. To finish the argument, note that the $H^{\gamma_d}$ local well-posedness gives an interval of existence $I = [0, M]$ (where $M$ depends only on the size of $\|u_0\|_{H^{\gamma_d}}$). Dividing this interval into subintervals of length $T$ and applying the above argument, we see that $\| |J|^{\gamma_d}u\|_{L_T^\infty L_x^2}$ remains bounded on the lifespan of $u$, as desired.
            \par
            The last ingredient is the claimed blowup alternative. Here, we rely on \cite[Theorem 4.2]{katoNonlinearSchrodingerEquations1995}, which handles our exact situation. 
        \end{proof}

        \subsection{Global Existence and Decay}\label{SS:GlobalExistence}
        Let $u$ be a solution to \eqref{E:SBP} in $d$ spatial dimensions with $\|u_0\|_{H^{\gamma_d,\gamma_d}} = \eps$. By the local existence result above, we can assume that
            \begin{equation}\label{E:bootstraphyp1}
                \|u(t)\|_{H^{\gamma_d}} \lesssim 2\eps, \qtq{and} \| |J|^{\gamma_d}(t)u(t)\|_{L^2} \lesssim 2\eps     
            \end{equation} 
            for $t \in [0, 1]$ and that the solution exists on some maximal interval $(0, T_\ast)$ so that $T_\ast > 1$. \par
            The first part of our argument is a bootstrap for times $t \geq 1$ involving the following norms:
            \begin{equation}
                \begin{aligned}
                    &\|u(t)\|_{\cD} := \|\widehat{f}(t)\|_{L^\infty} \qtq{where} f(t):= e^{-it\Delta} u(t) \\
                    &\|u(t)\|_{\cE} := t^{-\eps^2}\{\|\jbrak{\nabla}^{\gamma_d}u(t)\|_{L^2} + \| |J|^{\gamma_d}(t)u(t)\|_{L^2}\}
                \end{aligned}
            \end{equation}

            Finally, define $\|u(t)\|_{\cX} = \sup_{s \in [1, T]} \{\|u(s)\|_{\cD} + \|u(s)\|_{\cE}\}$. 

            \begin{lemma}
                For any $t \geq 1$, 
                \begin{equation}\label{E:LinftytoX}
                    \|u(t)\|_{L^\infty} \lesssim t^{-\frac{d}{2}}\{\|u(t)\|_{\cD} + \|u(t)\|_{\cE}\}.
                \end{equation}
            \end{lemma}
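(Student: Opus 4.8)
The plan is to exploit the factorization \eqref{E:MDFM}, $e^{it\Delta}=\M(t)\cD(t)\F\M(t)$, to pull out the $t^{-d/2}$ and then peel off the $\cD$-norm. Since $f(t)=e^{-it\Delta}u(t)$ we have $u(t)=\M(t)\cD(t)\F\M(t)f(t)$. The multiplier $\M(t)$ is multiplication by a unimodular function, hence an $L^\infty$-isometry, and $\|\cD(t)g\|_{L^\infty}=(2t)^{-d/2}\|g\|_{L^\infty}$, so
\begin{equation*}
    \|u(t)\|_{L^\infty}=(2t)^{-d/2}\,\big\|\F\M(t)f(t)\big\|_{L^\infty}.
\end{equation*}
I would then write $\M(t)f=f+(\M(t)-1)f$, so that $\F\M(t)f=\widehat f+\F\big((\M(t)-1)f\big)$. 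The first term contributes exactly $\|\widehat f(t)\|_{L^\infty}=\|u(t)\|_{\cD}$, and the whole problem reduces to estimating $\big\|\F\big((\M(t)-1)f(t)\big)\big\|_{L^\infty}$.

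For that term I would use $\|\F g\|_{L^\infty}\lesssim\|g\|_{L^1}$ together with the pointwise bound coming from \Cref{L:mvtbound}: with $\alpha\in(0,1]$ to be chosen, $\big|(\M(t)-1)f(x)\big|=\big|e^{i|x|^2/4t}-1\big|\,|f(x)|\lesssim t^{-\alpha}|x|^{2\alpha}|f(x)|$ (apply the lemma to the real number $|x|^2/4t$ with exponent $\alpha$). Hence
\begin{equation*}
    \big\|\F\big((\M(t)-1)f(t)\big)\big\|_{L^\infty}\lesssim t^{-\alpha}\,\big\||x|^{2\alpha}f(t)\big\|_{L^1},
\end{equation*}
and it remains to prove the weighted $L^1$ bound $\||x|^{2\alpha}f\|_{L^1}\lesssim\|f\|_{L^2}+\||x|^{\gamma_d}f\|_{L^2}$.

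I would establish this last bound by splitting the integral at $|x|=1$: on $|x|\le1$ the weight is bounded and Cauchy--Schwarz gives $\lesssim\|f\|_{L^2}$; on $|x|>1$ I write $|x|^{2\alpha}|f|=|x|^{2\alpha-\gamma_d}\cdot|x|^{\gamma_d}|f|$ and apply Cauchy--Schwarz, using that $\int_{|x|>1}|x|^{2(2\alpha-\gamma_d)}\dx<\infty$ precisely when $2\alpha<\gamma_d-\tfrac d2$. Since $\gamma_d>\tfrac d2$ by \eqref{E:gammad_defn}, such an $\alpha\in(0,1]$ exists; this is the only place the lower bound on $\gamma_d$ enters. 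Finally, by \eqref{E:Jtpower2} and the unitarity of $e^{it\Delta}$ on $L^2$ we have $\||x|^{\gamma_d}f(t)\|_{L^2}=\||J|^{\gamma_d}(t)u(t)\|_{L^2}$ and $\|f(t)\|_{H^{\gamma_d}}=\|\jbrak{\nabla}^{\gamma_d}u(t)\|_{L^2}$, so $\||x|^{2\alpha}f(t)\|_{L^1}\lesssim t^{\eps^2}\|u(t)\|_{\cE}$. Collecting the pieces,
\begin{equation*}
    \|u(t)\|_{L^\infty}\lesssim(2t)^{-d/2}\Big(\|u(t)\|_{\cD}+t^{\eps^2-\alpha}\|u(t)\|_{\cE}\Big),
\end{equation*}
and since $t\ge1$ and $\eps$ is small enough that $\eps^2\le\alpha$, the factor $t^{\eps^2-\alpha}\le1$, which yields the claim. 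There is no genuine obstacle here: the argument is bookkeeping around the $\M\cD\F\M$ decomposition, the only delicate point being the admissibility $2\alpha<\gamma_d-\tfrac d2$ of the weight in the $L^1\to L^\infty$ step (and the harmless $t^{\eps^2}$ loss from the $\cE$-norm, absorbed for $\eps$ small).
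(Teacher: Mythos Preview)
Your argument is correct and follows essentially the same route as the paper's proof: the $\M\cD\F\M$ factorization, the bound $|\M(t)-1|\lesssim t^{-\alpha}|x|^{2\alpha}$ from \Cref{L:mvtbound}, Hausdorff--Young, and Cauchy--Schwarz against a $\jbrak{x}^{\gamma_d}$ weight. The only cosmetic difference is that the paper takes $\alpha=\eps^2$ so that the factor $t^{-\eps^2}$ exactly matches the weight in the $\cE$-norm and no residual power of $t$ needs to be absorbed.
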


            \begin{proof}
                Recall that $u(t) = \M(t) \D(t) \F \M(t) f(t)$, so that by Hausdorff-Young, \Cref{L:mvtbound}, Cauchy-Schwarz, and the fact that $\gamma_d > \tfrac{d}{2}$, we have
                \begin{align*}
                    \|u(t)\|_{L^\infty} &= \| \M(t)\D(t)\F \M(t) f(t)\|_{L^\infty} \\
                    &\lesssim t^{-\frac{d}{2}}\{\|\hat{f}(t)\|_{L^\infty} + \|\F \left[\M(t) - 1\right]f(t)\|_{L^\infty}\} \\
                    &\lesssim t^{-\frac{d}{2}}\{\|\hat{f}(t)\|_{L^\infty} + t^{-\eps^2} \| |x|^{2\eps^2} f(t)\|_{L^1}\}\\
                    &\lesssim t^{-\frac{d}{2}} \{\|\hat{f}(t)\|_{L^\infty} + t^{-\eps^2} \| \jbrak{x}^{\gamma_d}f\|_{L^2}\}\\
                    &\lesssim t^{-\frac{d}{2}} \{\|u\|_{\cD} + \|u\|_{\cE}\}
                \end{align*}
                which proves the claim.
            \end{proof}
            \begin{remark}
                Here and throughout, we tacitly assume that $\eps$ is smaller than some fixed $\eps_0$. In this lemma, for example, our choice of $\gamma_d$ tells us we need $\eps < \tfrac{\sqrt{5}}{2}$. In full generality (without fixing $\gamma_d$), there will be some dependence of $\eps$ on $\gamma_d$. 
            \end{remark}
            
            \begin{proposition}[Energy norm estimate]\label{P:higherdimEnorm}
                In dimensions $d = 2, 3$ and for any $t \geq 1$, the following estimate for the energy norm holds:
                \begin{equation}
                    \|u(t)\|_{\cE} \lesssim 2\eps t^{-\eps^2} + (1+\eps^{2-\frac{2}{d}})t^{-\eps^2} \int_1^t s^{-1+\eps^2} \|u\|_{\cX}^{1+\frac{2}{d}} \ds
                \end{equation}
            \end{proposition}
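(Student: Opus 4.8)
The plan is to run the standard energy method on the two quantities $\|\jbrak{\nabla}^{\gamma_d}u(t)\|_{L^2}$ and $\| |J|^{\gamma_d}(t)u(t)\|_{L^2}$. Since $\jbrak{\nabla}^{\gamma_d}$ commutes with the free propagator and $|J|^{\gamma_d}(t) = e^{it\Delta}|x|^{\gamma_d}e^{-it\Delta}$ by \eqref{E:Jtpower2} (so that $\| |J|^{\gamma_d}(t)u(t)\|_{L^2} = \| |x|^{\gamma_d}e^{-it\Delta}u(t)\|_{L^2}$ by unitarity), differentiating these quantities in $t$ — equivalently, applying $\jbrak{\nabla}^{\gamma_d}$, resp.\ $|x|^{\gamma_d}e^{-it\Delta}$, to the Duhamel formula \eqref{E:DUHAMEL} started at time $1$ and using the $L^2$-isometry of $e^{it\Delta}$ — reduces matters, with the contribution of $u(1)$ controlled by \eqref{E:bootstraphyp1}, to the bound
\begin{equation*}
  \|\jbrak{\nabla}^{\gamma_d}u(t)\|_{L^2} + \| |J|^{\gamma_d}(t)u(t)\|_{L^2} \lesssim 2\eps + \int_1^t \bigl( \|\jbrak{\nabla}^{\gamma_d}\cN(u)(s)\|_{L^2} + \| |J|^{\gamma_d}(s)\cN(u)(s)\|_{L^2}\bigr)\ds ,
\end{equation*}
where $\cN(u) = \cN_1(u) - \cN_2(u)$. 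Multiplying through by $t^{-\eps^2}$, it then suffices to show the integrand is $\lesssim (1+\eps^{2-\frac2d})\,s^{-1+\eps^2}\|u\|_{\cX}^{1+\frac2d}$.

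For the Galilean pieces I would use the gauge-invariance reduction from the proof of \Cref{T:localexistence}: with $v = \M(-s)u$ one has $|v|=|u|$, $\|v\|_{L^p}=\|u\|_{L^p}$, $\M(-s)\cN_j(u)=\cN_j(v)$, and, by \eqref{E:Jtpower}, $\| |J|^{\gamma_d}(s)\cN_j(u)\|_{L^2} = (2s)^{\gamma_d}\||\nabla|^{\gamma_d}\cN_j(v)\|_{L^2}$, while $(2s)^{\gamma_d}\||\nabla|^{\gamma_d}v\|_{L^2} = \| |J|^{\gamma_d}(s)u\|_{L^2}$. Hence every $|J|^{\gamma_d}$ estimate has the same shape as the corresponding $\jbrak{\nabla}^{\gamma_d}$ estimate, with one copy of $v$ in place of $u$ and the derivative landing on it producing $\| |J|^{\gamma_d}(s)u\|_{L^2}$ rather than $\||\nabla|^{\gamma_d}u\|_{L^2}$ — both bounded by $s^{\eps^2}\|u\|_{\cX}$ via the definition of $\|\cdot\|_{\cE}$.

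For $\cN_1(u)=(\K\ast|u|^2)u$ I would apply the fractional product rule to distribute $\jbrak{\nabla}^{\gamma_d}$ between $\K\ast|u|^2$ and $u$, use $|\nabla|^{\gamma_d}(\K\ast|u|^2) = \K\ast|\nabla|^{\gamma_d}|u|^2$ together with a second fractional product rule on $|u|^2=u\bar u$, peel off the convolution with \Cref{L:Kregularity}(2), and finish with H\"older and the interpolation $\|u\|_{L^p}\lesssim\|u\|_{L^2}^{2/p}\|u\|_{L^\infty}^{1-2/p}$. The crucial point is to use the \emph{endpoint} exponent $\tfrac1q-\tfrac1r = 1-\tfrac1d$ in \Cref{L:Kregularity}(2) (available precisely because that lemma includes it), which forces the low-regularity factors to produce exactly $\|u\|_{L^\infty}^{2/d}\|u\|_{L^2}^{2-2/d}$. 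Inserting the conservation of mass $\|u(s)\|_{L^2}=\|u_0\|_{L^2}\le\eps$, the decay bound $\|u(s)\|_{L^\infty}\lesssim s^{-d/2}\|u\|_{\cX}$ from \eqref{E:LinftytoX}, and $s^{\eps^2}\|u\|_{\cX}$ for the derivative factor, and using $(s^{-d/2})^{2/d}=s^{-1}$ and $2-\tfrac2d=\tfrac{2(d-1)}{d}$, each term is $\lesssim \eps^{2-\frac2d}s^{-1+\eps^2}\|u\|_{\cX}^{1+\frac2d}$. For $\cN_2(u)=|u|^{\frac2d}u$ I would use a fractional chain/product rule adapted to the H\"older regularity $C^{1,2/d}$ of $z\mapsto|z|^{\frac2d}z$ — this is where $\gamma_d<1+\tfrac2d$, i.e.\ $\gamma_d-1<\tfrac2d$, is used — to obtain $\|\jbrak{\nabla}^{\gamma_d}\cN_2(u)\|_{L^2}\lesssim \|u\|_{L^\infty}^{2/d}\bigl(\|\jbrak{\nabla}^{\gamma_d}u\|_{L^2}+\|u\|_{L^2}\bigr)$, which by the same substitutions is $\lesssim s^{-1+\eps^2}\|u\|_{\cX}^{1+\frac2d}$. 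Summing the $\cN_1$ and $\cN_2$ contributions, integrating in $s$, and restoring the $t^{-\eps^2}$ weight gives the proposition.

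The main obstacle is the bookkeeping in the $\cN_1$ estimate: arranging the two nested fractional product rules and the convolution estimate so that the endpoint of \Cref{L:Kregularity}(2) is exactly what is needed, and so that precisely the power $\|u\|_{L^\infty}^{2/d}$ of dispersive decay is extracted (hence the full factor $s^{-1}$, with the leftover mass factor giving $\eps^{2-2/d}$). This is the one place where the inhomogeneity of $\K$, as opposed to the scale-invariant Hartree kernel $|x|^{-1}$ of \cite{hayashiAsymptoticsLargeTime1998a}, genuinely costs something. A secondary technical point is verifying the fractional chain rule for the non-smooth map $|u|^{\frac23}u$ at the regularity $\gamma_3\in(1,1+\tfrac23)$ in dimension three.
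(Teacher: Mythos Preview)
Your approach is correct and essentially matches the paper's: Duhamel from $t=1$, gauge invariance via $v=\M(-t)u$ to convert the $|J|^{\gamma_d}$ estimates into derivative estimates, a fractional chain rule for $\cN_2$, and a product rule plus convolution estimate for $\cN_1$ producing precisely the factor $\|u\|_{L^\infty}^{2/d}\|u\|_{L^2}^{2-2/d}\,\||J|^{\gamma_d}u\|_{L^2}$. The one packaging difference is that for $\cN_1$ the paper first uses the pointwise bound $\K(x)\leq |x|^{-1}$ to reduce to the Hartree kernel and then invokes (a minor modification of) \cite[Lemma~2.4]{hayashiAsymptoticsLargeTime1998a} via the Riesz potential identity $|\cdot|^{-1}\ast \sim |\nabla|^{-(d-1)}$ --- so the inhomogeneity of $\K$ in fact costs nothing here, and your endpoint of \Cref{L:Kregularity}(2) is exactly the same Hardy--Littlewood--Sobolev estimate in different clothing.
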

            \begin{proof}
                First, we note the trivial fact that $\K(x) \leq |x|^{-1}$. This allows us to bound \[|(\K \ast |u|^2)| \leq |(|\cdot|^{-1}\ast |u|^2)|.\] This replacement will allow us to take full advantage of the treatment of the $2$- and $3$-dimensional Hartree problem given in \cite{hayashiAsymptoticsLargeTime1998a}.\par
                To estimate the $\cE$-norm, we need to commute $|J|^{\gamma_d}$ with \eqref{E:SBP} and write the result as a Duhamel integral. This leads to 
                \begin{equation}
                    |J|^{\gamma_d} u(t) = e^{i(t-1)\Delta} |J|^{\gamma_d}(1)u(1) - i\int_1^t e^{i(t-s)\Delta}|J|^{\gamma_d}(s)[(\K \ast |u|^2)u(s) - |u|^{\frac{2}{d}}u(s)]\ds
                \end{equation}
                Taking the $L^2$-norm and using \eqref{E:Jtpower}, with $v = \M(-t)u$ we have 
                \begin{multline*}
                    \| |J|^{\gamma_d}u(t)\|_{L^2} \lesssim \| |J|^{\gamma_d}(1) u(1)\|_{L^2} + \int_1^t \| (-4t^2 \Delta)^{\frac{\gamma_d}{2}}[(\K \ast |v|^2)v]\|_{L^2} \\+ \| (-4t^2\Delta)^{\gamma_d} |v|^{\frac{2}{d}}v\|_{L^2}\ds.
                \end{multline*}
                The second term under the integral sign is controlled by 
                \[
                  \| (-4t^2\Delta)^{\frac{\gamma_d}{2}} |v|^{\frac{2}{d}}v\|_{L^2} \lesssim \| v\|_{L^\infty}^{\frac{2}{d}} \| |J|^{\gamma_d}u\|_{L^2},
                \]
                which is acceptable in light of the definition of $v$. For the first term, we note that we can modify the proof of \cite[Lemma 2.4]{hayashiAsymptoticsLargeTime1998a} to give the estimate 
                \[
                  \||J|^{\gamma_d} [(\K \ast |u|^2)u]\|_{L^2} \lesssim \|u\|_{L^\infty}^{\frac{2}{d}}\|e^{-it\Delta}u\|_{L^2}^{2-\frac{2}{d}}\||J|^{\gamma_d}u\|_{L^2}.
                \]
                Indeed, the details of the modification are as follows: we can replace the potential $V$ in the proof of Lemma 2.4 with $\K$ and change nothing. This nets us the estimate (in their notation)
                \begin{equation*}
                    |\Im\jbrak{|x|^{\gamma_d}e^{-it\Delta}(\K \ast |u|^2)u, |x|^{\gamma_d}e^{-it\Delta}u}| \lesssim \|g\|_{L^r}\| (-t^2\Delta)^{\frac{\gamma_d}{2}} f\|_{L^q} \|(-t^2\Delta)^{\frac{\gamma_d}{2}}g\|_{L^2}
                \end{equation*}
                where $\tfrac{1}{q} + \tfrac{1}{r}= \tfrac{1}{2}$. Then, since $f = \K \ast |g|^2 \leq |\cdot|^{-1} \ast |g|^2$ and Fourier multipliers commute, the same argument in their paper (i.e. replacing $|x|^{-1}$ with $|\nabla|^{-\frac{1}{2}(d-1)}$) yields
                \begin{equation*}
                    |\Im\jbrak{|x|^{\gamma_d}e^{-it\Delta}(\K \ast |u|^2)u, |x|^{\gamma_d}e^{-it\Delta}u}| \lesssim \|g\|_{L^r} \| (-t^2\Delta)^{\frac{\gamma_d}{2}}|g|^2\|_{L^{q_1}}\| |x|^{\gamma} e^{-it\Delta}u\|_{L^2}
                \end{equation*}
                where $\tfrac{1}{q} = \tfrac{1}{q_1} - \tfrac{d-1}{d}$. The remainder of the proof proceeds in the same way to give the claimed estimate. \par
                Putting everything together, we find 
                \begin{equation*}
                    \| |J|^{\gamma_d}u\|_{L^2} \lesssim 2\eps + \int_1^t \|u\|_{L^\infty}^{\frac{2}{d}}\||J|^{\gamma_d}u\|_{L^2} \left(1+\|e^{-it\Delta}u\|_{L^2}^{2-\frac{2}{d}}\right) \ds.
                \end{equation*}
                Recalling that $e^{-it\Delta}$ is unitary on $L^2$ and that mass is conserved by the Schr\"odinger flow, we conclude the desired estimate using the definition of the $\cX$-norm and the hypotheses on the inital data $u_0$. 
            \end{proof}
            \begin{proposition}[Dispersive norm estimate]\label{P:higherdimDnorm}
                In dimensions $d = 2, 3$ and for $t \geq 1$, the following estimate holds: 
                \begin{equation}
                    \|u(t)\|_{\cD} \lesssim 2\eps + \int_1^t s^{-1-\frac{1}{50}+3\eps^2} \|u\|_{\cX}^3 + s^{-1-\frac{1}{50}+(1+\frac{2}{d})\eps^2}\|u\|_{\cX}^{1+\frac{2}{d}} \ds
                \end{equation}
            \end{proposition}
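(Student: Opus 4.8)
Following \cite{hayashiAsymptoticsLargeTime1998a}, the plan is to derive an approximate ODE for the profile $\widehat f(t,\xi)$, isolate a ``resonant'' term of size $t^{-1}$ whose coefficient is \emph{real}, show that what is left decays like $t^{-1-1/50}$, and close with a gauge identity in place of Grönwall. From \eqref{E:SBP} the profile $f = e^{-it\Delta}u$ solves $i\partial_t f = e^{-it\Delta}\cN(u)$ with $\cN(u):=(\K\ast|u|^2)u-|u|^{2/d}u$, so that
\[
  \partial_t\widehat f(t,\xi) = -i\,e^{it|\xi|^2}\,\widehat{\cN(u)}(t,\xi).
\]

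\textbf{Reducing the nonlinearity.} Rewriting \eqref{E:MDFM} as $\M(t)\cD(t)=e^{it\Delta}\M(-t)\F^{-1}$ and setting $\psi(t):=\F\M(t)f(t)$, so that $u(t,x)=(2it)^{-d/2}e^{i|x|^2/4t}\psi(t,x/2t)$, one has
\[
  e^{it|\xi|^2}\,\widehat{\M(t)\cD(t)g}(\xi) = g(\xi) + \F\left[(\M(-t)-1)\F^{-1}g\right](\xi),
\]
and by \Cref{L:mvtbound} and Cauchy--Schwarz (the computation behind \eqref{E:LinftytoX}, using $\gamma_d>\tfrac{d}{2}$) the last term is $\bigo(t^{-\kappa}\|g\|_{H^{\gamma_d}})$ for any fixed $\kappa<\tfrac{1}{2}(\gamma_d-\tfrac{d}{2})$; the case $g=\psi$ gives $\|\psi(t)-\widehat f(t)\|_{L^\infty}\lesssim t^{-\kappa}(\|u(t)\|_{L^2}+\| |J|^{\gamma_d}(t)u(t)\|_{L^2})$. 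Pointwise $|u|^{2/d}u=(2t)^{-1}\M(t)\cD(t)[\,|\psi|^{2/d}\psi\,]$, while after rescaling the convolution and writing $\K(2ty)=(2t)^{-1}|y|^{-1}-(2t)^{-1}|y|^{-1}e^{-2t|y|}$ one finds
\[
  (\K\ast|u|^2)u = (2t)^{-1}\M(t)\cD(t)\left[(|\cdot|^{-1}\ast|\psi|^2)\,\psi\right] + \mathcal{E}_{\K},
\]
where a direct scaling estimate shows $\mathcal{E}_{\K}$ contributes only a term of size $\bigo(t^{-d}\|u\|_{\cX}^{3})$ to $\partial_t\widehat f$ --- this is the one point at which the inhomogeneity of $\K$, as opposed to the degree $-1$ Hartree kernel of \cite{hayashiAsymptoticsLargeTime1998a}, matters, and it is harmless since $d\ge2$. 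Combining these displays with $\psi=\widehat f+\bigo(t^{-\kappa}\cdots)$ yields
\[
  \partial_t\widehat f(t,\xi) = -\frac{i}{2t}\,\Lambda(t,\xi)\,\widehat f(t,\xi) + \mathcal R(t,\xi),\qquad \Lambda(t,\xi):=\big(|\cdot|^{-1}\ast|\widehat f(t)|^2\big)(\xi)-|\widehat f(t,\xi)|^{2/d},
\]
with $\Lambda$ real-valued and $\mathcal R$ collecting the $(\M(-t)-1)$-errors, the errors from replacing $\psi$ by $\widehat f$ in the nonlinearities, and $\mathcal{E}_{\K}$.

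\textbf{Estimating the remainder.} This is the technical core. Each $(\M(-t)-1)$-error has the form $t^{-\kappa}\|g\|_{H^{\gamma_d}}$ with $g$ one of $|\psi|^{2/d}\psi$ or $(|\cdot|^{-1}\ast|\psi|^2)\psi$; I would bound $\| |\psi|^{2/d}\psi\|_{H^{\gamma_d}}$ by the fractional chain and product rules --- legitimate precisely because $\gamma_d<1+\tfrac{2}{d}$ --- and $\|(|\cdot|^{-1}\ast|\psi|^2)\psi\|_{H^{\gamma_d}}$ using in addition the convolution estimates of \Cref{L:Kregularity}. Via \eqref{E:Jtpower2}, \eqref{E:MDFM} and the definitions of the $\cD$- and $\cE$-norms, the $\dot H^{\gamma_d}$- and weighted $L^2$-norms of $\psi$ are all $\lesssim t^{\eps^2}\|u\|_{\cX}$ while $\|\psi\|_{L^\infty}\lesssim\|u\|_{\cX}$; the same bounds handle the errors from replacing $\psi$ by $\widehat f$. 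Keeping track of the number of $\cE$-type factors (at most three in the cubic term and $1+\tfrac{2}{d}$ in the subcritical one) and using that $\gamma_d$ is the midpoint of $(\tfrac{d}{2},1+\tfrac{2}{d})$ --- so $\kappa$ may be taken larger than $\tfrac{1}{50}$ for $d\in\{2,3\}$ --- one arrives at
\[
  \|\mathcal R(t)\|_{L^\infty} \lesssim t^{-1-\frac{1}{50}+3\eps^2}\|u\|_{\cX}^{3} + t^{-1-\frac{1}{50}+(1+\frac{2}{d})\eps^2}\|u\|_{\cX}^{1+\frac{2}{d}}.
\]

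\textbf{Closing.} Because $\Lambda$ is real, $\overline{\widehat f}\,\partial_t\widehat f = -\tfrac{i}{2t}\Lambda|\widehat f|^2+\overline{\widehat f}\,\mathcal R$, hence $\partial_t|\widehat f(t,\xi)|^2 = 2\,\mathrm{Re}\big(\overline{\widehat f(t,\xi)}\,\mathcal R(t,\xi)\big)$ and the dangerous $t^{-1}$ term disappears. Replacing $|\widehat f|$ by $\max(|\widehat f|,\eta)$ to absorb the zero set and letting $\eta\downarrow0$ gives $\tfrac{d}{dt}\|\widehat f(t)\|_{L^\infty}\le\|\mathcal R(t)\|_{L^\infty}$ for a.e.\ $t\ge1$; integrating over $[1,t]$, bounding $\|\widehat f(1)\|_{L^\infty}\le(2\pi)^{-d/2}\|e^{-i\Delta}u(1)\|_{L^1}\lesssim\|u(1)\|_{H^{\gamma_d,\gamma_d}}\lesssim\eps$ via \eqref{E:bootstraphyp1}, and inserting the remainder bound yields the proposition. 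The main obstacle is exactly this remainder estimate: one must control the two non-smooth nonlinearities $(\K\ast|u|^2)u$ and $|u|^{2/d}u$ in $H^{\gamma_d}$ sharply enough that the oscillatory gain $t^{-\kappa}$ is not eaten by the $t^{\eps^2}$-per-factor growth of the $\cE$-norm --- which is precisely why $\gamma_d$ must be pinned strictly between $\tfrac{d}{2}$ (so that $\widehat f\in L^\infty$ and the spatial weights are controlled) and $1+\tfrac{2}{d}$ (so that the fractional chain rule applies to $|u|^{2/d}u$).
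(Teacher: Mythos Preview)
Your argument is correct and follows the same Hayashi--Naumkin template as the paper --- derive an approximate ODE for $\widehat f$, peel off a real $t^{-1}$ coefficient, bound the remainder in $L^\infty$ --- but you diverge from the paper in two places. First, you take the homogeneous Hartree kernel $|\cdot|^{-1}$ as the resonant coefficient and treat the inhomogeneous correction $|y|^{-1}e^{-2t|y|}$ as a separate error $\mathcal E_{\K}$ with $O(t^{-d})$ decay; the paper instead keeps the full rescaled kernel $\K_t$ in the counterterm, subtracts $(\K\ast|\widehat f|^2)\widehat f$ (with $\K$, not $|\cdot|^{-1}$), and only invokes the pointwise bound $\K_t\le|\cdot|^{-1}$ when estimating the resulting errors $I_1,I_2$. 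For this proposition both choices work, but the paper's is the one that feeds directly into \S\ref{S:AsympBehavior}: the asymptotic phase in \Cref{T:MainTheorem} is $\K\ast|\cW|^2$, not $|\cdot|^{-1}\ast|\cW|^2$, and your choice of $\Lambda$ would deliver the latter. Second, you close by computing $\partial_t|\widehat f|^2$ and using that $\Lambda$ is real to drop the $t^{-1}$ term, then pass to $\tfrac{d}{dt}\|\widehat f\|_{L^\infty}$ via the $\max(|\widehat f|,\eta)$ device; the paper instead introduces the unimodular integrating factor $B(t)$, sets $g=B\widehat f$, and applies the fundamental theorem of calculus to $g$ directly. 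These are equivalent for the $\cD$-norm bound, but again the integrating-factor formulation is what is reused verbatim in \S\ref{S:AsympBehavior} to read off the modified scattering phase.
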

            \begin{proof}
                Recall that we set $f(t) = e^{-it\Delta}u(t)$ for $u$ a solution to \eqref{E:SBP}; we will refer to $f$ as the `profile' of $u$. If we compute directly, we see that $f$ satisfies 
                \begin{equation}
                    i\partial_t f = e^{-it\Delta}\{(\K \ast |u|^2)u - |u|^{\frac{2}{d}}u\}.
                \end{equation}
                By \eqref{E:MDFM} and a bit of computation, we can rewrite the right-hand side in the form 
                \begin{multline}
                    i\partial_t f = (2t)^{-1} \M^{-1} \F^{-1}  (\K_t \ast |\cD^{-1} \M^{-1}u|^2)(\cD^{-1}\M^{-1}u) \\- (2t)^{-1} \M^{-1} \F^{-1}|\cD^{-1} \M^{-1} u|^{\frac{2}{d}} \cD^{-1} \M^{-1} u 
                \end{multline}
                where $\K_t(x)$ is defined by 
                \begin{equation}
                    \K_t(x) = \frac{1 - e^{-2t|x|}}{|x|}.
                \end{equation}
                Note that $\K_t(x) \leq |x|^{-1}$ for all $t > 0$. 
                We then use the fact that $u = \M \cD \F \M f$ to rewrite the right-hand side of the above one final time:
                \begin{multline}\label{E:profileeqn23d}
                    i\partial_t f = (2t)^{-1} \M^{-1} \F^{-1}  (\K_t \ast |\F \M f|^2) (\F\M f) \\- (2t)^{-1} \M^{-1} \F^{-1}|\F \M f|^{\frac{2}{d}} \F \M f .
                \end{multline}
                We will then perform one final algebraic manipulation, which basically involves adding and subtracting some counterterms. This leaves us with the equation
                    \begin{align*}
                    i\partial_t f - &(2t)^{-1} \F^{-1}\left[ (\K \ast |\hat{f}^2|)\hat{f} - |\hat{f}|^{\frac{2}{d}}\hat{f} \right] = \nonumber\\ 
                    &(2t)^{-1} (\M^{-1} - 1)\F^{-1}(\K_t \ast |\F \M f|^2) (\F\M f) \\ 
                    &+ (2t)^{-1} \F^{-1} \left\{(\K_t \ast |\F \M f|^2) (\F\M f) - (\K \ast |\hat{f}|^2)\hat{f}\right\} \\
                    &- (2t)^{-1} (\M^{-1}-1)\F^{-1}|\F\M f|^{\frac{2}{d}}\F \M f \\
                    &- (2t)^{-1} \F^{-1} \left\{|\F \M f|^{\frac{2}{d}}\F \M f - |\hat{f}|^{\frac{2}{d}} \hat{f}\right\} 
                    \end{align*}
                    
                Taking the Fourier transform, we have 
                \begin{align}
                    i\partial_t \hat{f} - &(2t)^{-1} \left[ (\K \ast |\hat{f}^2|)\hat{f} - |\hat{f}|^{\frac{2}{d}}\hat{f} \right] = \nonumber\\
                    &(2t)^{-1} \F (\M^{-1} - 1)\F^{-1}(\K_t \ast |\F \M f|^2) (\F\M f) \label{E:I1}\\ &+ (2t)^{-1} \left\{(\K_t \ast |\F \M f|^2) (\F\M f) - (\K \ast |\hat{f}|^2)\hat{f}\right\} \label{E:I2} \\
                    &- (2t)^{-1} \F(\M^{-1}-1)\F^{-1}|\F\M f|^{\frac{2}{d}}\F \M f \label{E:I3} \\
                    &- (2t)^{-1} \left\{|\F \M f|^{\frac{2}{d}}\F \M f - |\hat{f}|^{\frac{2}{d}} \hat{f}\right\} \label{E:I4}.
                \end{align}
                For notational brevity, rewrite the above display as 
                \begin{multline}
                    i\partial_t \hat{f} - (2t)^{-1} \left[ (\K \ast |\hat{f}^2|)\hat{f} - |\hat{f}|^{\frac{2}{d}}\hat{f} \right] =\eqref{E:I1} +\eqref{E:I2}+ \eqref{E:I3} + \eqref{E:I4} \\:=(2t)^{-1} \left\{I_1(t) + I_2(t) - I_3(t) - I_4(t)\right\},
                \end{multline}
                defining $I_j(t)$ in the natural way from the earlier equation and suggestive grouping of terms.
                
                We can remove the bracketed term on the left-hand side by introducing an integrating factor. Indeed, setting 
                \begin{equation}
                    B(t) = \exp\left(\frac{i}{2}\int_1^t \frac{1}{s} (\K \ast |\hat{f}|^2 - |\hat{f}|^{\frac{2}{d}}) \ds\right)
                \end{equation}
                and defining $g(t) = B(t) \hat{f}(t)$, we find the equation 
                \begin{equation}
                    i\partial_t g = (2t)^{-1}B(t)\{I_1 + I_2 - I_3 - I_4\}
                \end{equation}
                for $g(t)$. 
                By the fundamental theorem of calculus, 
                \begin{equation}
                    g(t) = g(1) - i \int_1^t (2s)^{-1} B(s)\{I_1(s) + I_2(s) - I_3(s)- I_4(s)\} \ds
                \end{equation}
                It then behooves us to provide a bound in $L^\infty$ for each $I_j(s)$. \par
                For $\|I_1(s)\|_{L^\infty}$, we have (by essentially the same argument as in \cite{hayashiAsymptoticsLargeTime1998a})
                \begin{align*}
                    \|I_1(s)\|_{L^\infty} &= \|\F (\M^{-1}-1)\F^{-1}(\K_t \ast |\F \M f|^2) (\F\M f)\|_{L^\infty} \\
                    &\leq \|(\M^{-1}-1)\F^{-1}\left[(\K_t \ast |\F\M f|^2)\F \M f\right]\|_{L^1}\\
                    &\lesssim t^{-\frac{1}{50}} \| |x|^{\frac{1}{25}} \F^{-1} \left[(\K_t \ast |\F \M f|^2)\F \M f\right] \|_{L^1} \\
                    &\lesssim t^{-\frac{1}{50}} \| \jbrak{x}^{\gamma_d} \F^{-1} \left[(\K_t \ast |\F \M f|^2)\F\M f\right]\|_{L^2} \\
                    &\lesssim t^{-\frac{1}{50}} \| \jbrak{\nabla}^{\gamma_d} \left[(\K_t \ast |\F \M f|^2) \F \M f\right]\|_{L^2}\\
                    &\lesssim t^{-\frac{1}{50}} \| \jbrak{\nabla}^{\gamma_d} (\K_t \ast |\F\M f|^2)\|_{L^2} \| \jbrak{\nabla}^{\gamma_d} \F \M f\|_{L^2}.
                \end{align*}
                To conclude, we estimate both quantities above: 
                \begin{align*}
                    \|\jbrak{\nabla}^{\gamma_d} (\K_t \ast |\F\M f|^2)\|_{L^2} &= \| \K_t \ast \jbrak{\nabla}^{\gamma_d} |\F\M f|^2\|_{L^2} \\
                    &\lesssim \| |\cdot|^{-1} \ast \jbrak{\nabla}^{\gamma_d} |\F \M f|^2\|_{L^2} \\
                    &\lesssim \| |\nabla|^{-\frac{1}{2}(d-1)} \jbrak{\nabla}^{\gamma_d} |\F \M f|^2\|_{L^2}\\
                    &\lesssim\begin{multlined}[t]
                        \| |\nabla|^{-\frac{1}{2}(d-1)} |\F \M f|^2\|_{L^2} \\+ \| |\nabla|^{-\frac{1}{2}(d-1)} |\nabla|^{\gamma_d} |\F \M f|^2\|_{L^2}
                    \end{multlined}
                \end{align*}
               We then see that by Plancherel, the first term can be controlled by (since $\gamma_d > d/2 \geq \frac{1}{2}$)
                \begin{align*}
                    \| |\nabla|^{-\frac{1}{2}(d-1)} |\F \M f|^2\|_{L^2} &\lesssim \| \jbrak{\xi}^{\gamma_d} |\xi|^{-\frac{1}{2}(d-1)} \F[|\F \M f|^2]\|_{L^2} \\
                    &\lesssim \| \jbrak{\nabla}^{\gamma_d} |\F \M f|^2\|_{L^2}\\
                    &\lesssim \| \jbrak{\nabla}^{\gamma_d} \F\M f\|_{L^2}^2
                \end{align*}
                and the second term can be controlled by 
                \begin{align*}
                    \| |\nabla|^{-\frac{1}{2}(d-1)} |\nabla|^{\gamma_d} |\F \M f|^2\|_{L^2} &\lesssim \| |\nabla|^{\gamma_d - \frac{d}{2}+ \frac{1}{2}} |\F \M f|^2\|_{L^2} \\
                    &\lesssim \| \jbrak{\nabla}^{\gamma_d} |\F \M f|^2\|_{L^2}\\
                    &\lesssim \| \jbrak{\nabla}^{\gamma_d} \F \M f\|_{L^2}^2.
                \end{align*}
                In particular, the bound on $\| \jbrak{\nabla}^{\gamma_d} \F \M f\|_{L^2}$ we need to provide will finish the job for the two terms above. To this end, we have 
                \begin{align*}
                    \| \jbrak{\nabla}^{\gamma_d} \F \M f\|_{L^2} &\lesssim \| \jbrak{x}^{\gamma_d} \M f \|_{L^2} \\
                    &\lesssim \| \jbrak{x}^{\gamma_d} f\|_{L^2} \\
                    &\lesssim t^{\eps^2} \| u\|_{\cX}.
                \end{align*}
                Thus the total contribution from $\|I_1(s)\|_{L^\infty}$ is 
                \begin{equation*}
                    \| I_1(s)\|_{L^\infty} \lesssim t^{-\frac{1}{50}+3\eps^2} \|u\|_{\cX}^3,
                \end{equation*}
                which is acceptable. 
                For $\|I_2(s)\|_{L^\infty}$, we have 
                \begin{align*}
                    \|I_2(s)\|_{L^\infty} &= \left\|(\K_t \ast |\F \M f|^2) (\F\M f) - (\K \ast |\hat{f}|^2)\hat{f} \,\right\|_{L^\infty} \\
                    &= \left\| (\K \ast |\hat{f}|^2)\hat{f} - (\K_t \ast |\F \M f|^2)\F \M f\right\|_{L^\infty} \\
                    &\leq \left\|(\K \ast |\hat{f}|^2) \hat{f} - (\K \ast |\F\M f|^2)\F \M f\right\|_{L^\infty} \\
                    &= \left\| (\K \ast |\F \M f|^2)\F \M f - (\K \ast |\hat{f}|^2)\hat{f} \, \right\|_{L^\infty}
                \end{align*}
                
                By writing 
                \[
                  (\K \ast |\F \M f|^2)\F\M f = (\K \ast |\F \M f|^2)(\F \M f - \hat{f} + \hat{f})
                \]
                and doing a bit of rearranging, we can rewrite 
                \begin{align*}
                    \|I_2(s)\|_{L^\infty} &\lesssim \left\| \K \ast |\F\M f|^2\right\|_{L^\infty} \| \F \M f - \hat{f}\|_{L^\infty} 
                    \\
                    &+ \left( \left\| \K \ast [((\F\M f)-\hat{f})\bbar{\hat{f}}] \, \right\|_{L^\infty} + \left\| \K \ast [(\bbar{\F\M f - \hat{f}})\hat{f}] \, \right\|_{L^\infty}\right)\|\hat{f}\|_{L^\infty}
                \end{align*}
                By the same trivial bound $\K(x) \leq |x|^{-1}$, we can bound $\|I_2(s)\|_{L^\infty}$ from above by 
                \begin{align*}
                    \|I_2(s)\|_{L^\infty}&\lesssim \left\| |\cdot|^{-1} \ast |\F\M f|^2\right\|_{L^\infty} \| \F \M f - \hat{f}\|_{L^\infty} 
                    \\
                    &\begin{multlined}
                        +  \left\| |\cdot|^{-1} \ast [((\F\M f)-\hat{f})\bbar{\hat{f}}] \, \right\|_{L^\infty}\|\hat{f}\|_{L^\infty} \\
                    + \left\| |\cdot|^{-1} \ast [(\bbar{\F\M f - \hat{f}}) \hat{f}] \, \right\|_{L^\infty}\|\hat{f}\|_{L^\infty}   
                    \end{multlined}
                     \\
                    &\lesssim t^{-\frac{1}{50}+ 3\eps^2} \|u\|_{\cX}^3
                \end{align*}
                where the final inequality comes from the estimates in \cite[equation (4.9)f]{hayashiAsymptoticsLargeTime1998a}. \par

                For $\|I_3(s)\|_{L^\infty}$ and $\|I_4(s)\|_{L^\infty}$ we can appeal directly to \cite[equations (3.16) and (3.17)]{hayashiAsymptoticsLargeTime1998a} to get
                \begin{align*}
                    \|I_3(s)\|_{L^\infty} &= \|\F(\M^{-1}-1)\F^{-1}|\F\M f|^{\frac{2}{d}}\F \M f\|_{L^\infty} \\
                    &\lesssim t^{-\frac{1}{50}+ (1+\frac{2}{d})\eps^2} \|u\|_{\cX}^{1+\frac{2}{d}}
                \end{align*}
                and 
                \begin{align*}
                    \|I_4(s)\|_{L^\infty} &= \left\|\F \M f|^{\frac{2}{d}}\F \M f - |\hat{f}|^{\frac{2}{d}} \hat{f} \,\right\|_{L^\infty} \\
                    &\lesssim t^{-\frac{1}{50} + (1+\frac{2}{d})\eps^2}\|u\|_{\cX}^{1+\frac{2}{d}}.
                \end{align*}
                which completes the proof. Indeed, we have 
                \begin{equation*}
                    \|g(t)\|_{L^\infty} \lesssim \|g(1)\|_{L^\infty} + \int_1^t s^{-1} \sum_{j = 1}^4 \|I_j(s)\|_{L^\infty} \ds.
                \end{equation*} 
               Recalling that $|g(t)| = |\hat{f}(t)|$ and that the local theory from above implies $\|g(1)\|_{L^\infty} \leq 2\eps$ by Sobolev embedding, we have 
                \begin{equation*}
                    \|u(t)\|_{\cD} \lesssim 2\eps + \int_1^t s^{-1-\frac{1}{50}+3\eps^2}\|u\|_{\cX}^3 + s^{-1-\frac{1}{50}+(1+\frac{2}{d})}\|u\|_{\cX}^{1+\frac{2}{d}} \ds, 
                \end{equation*}
                which is exactly what we set out to show. 
            \end{proof}
            By a continuity argument, the local theory from above, and the previous two lemmas, we are able to conclude the following
            \begin{corollary}
                For $\|u_0\|_{H^{\gamma_d, \gamma_d}} = \eps $ sufficiently small, there exists a unique forward-global solution $u \in C_t^0 H_x^{\gamma_d, \gamma_d}([0, \infty) \times \R^d)$ to \eqref{E:SBP} with $u(0, x) = u_0(x)$ satisfying 
                \begin{equation}
                    \|u(t)\|_{\cX} \lesssim \eps \qtq{for all} t \geq 1.
                \end{equation}
                In particular, we have the $L^\infty$ decay estimate
                \begin{equation}
                    \|u(t)\|_{L^\infty} \lesssim \eps(1+ |t|)^{-\frac{1}{2}}  \qtq{for all} t \geq 0. 
                \end{equation}
            \end{corollary}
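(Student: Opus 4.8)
The plan is to run a continuity (bootstrap) argument that feeds \Cref{T:localexistence}, \Cref{P:higherdimEnorm}, \Cref{P:higherdimDnorm}, and the $L^\infty$ bound \eqref{E:LinftytoX} into the functional $T\mapsto \|u\|_{\cX_T}:=\sup_{s\in[1,T]}\{\|u(s)\|_{\cD}+\|u(s)\|_{\cE}\}$ on the maximal forward interval of existence $[0,T_\ast)$. To initialize it, note that on $[0,1]$ the local theory and \eqref{E:bootstraphyp1} control $\|u(t)\|_{H^{\gamma_d}}+\||J|^{\gamma_d}(t)u(t)\|_{L^2}\lesssim\eps$; combining this with $\gamma_d>\tfrac d2$, unitarity of $e^{-it\Delta}$ on $L^2$, and Hausdorff--Young/Sobolev embedding gives $\|u(1)\|_{\cD}+\|u(1)\|_{\cE}\lesssim\eps$ (the $\cE$-weight being $1$ at $t=1$), i.e.\ $\|u\|_{\cX_1}\le C_0\eps$.

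For the bootstrap step, fix a large constant $A$ (to be chosen once the implied constants are revealed) and suppose $T\in(1,T_\ast)$ satisfies $\|u\|_{\cX_T}\le A\eps$. Feeding this into \Cref{P:higherdimDnorm}: since $\eps$ is small we have $3\eps^2<\tfrac1{50}$ and $(1+\tfrac2d)\eps^2<\tfrac1{50}$, so both time integrals are absolutely convergent with $\bigO(1)$ value, giving $\|u(t)\|_{\cD}\lesssim\eps+(A\eps)^3+(A\eps)^{1+2/d}$ uniformly on $[1,T]$. Feeding it into \Cref{P:higherdimEnorm}: the weight $t^{-\eps^2}$ in $\|\cdot\|_{\cE}$ is tuned so that $t^{-\eps^2}\int_1^t s^{-1+\eps^2}\,ds$ does not grow in $t$, and with $(1+\eps^{2-2/d})\lesssim 1$ this yields $\|u(t)\|_{\cE}\lesssim\eps+(A\eps)^{1+2/d}$ uniformly on $[1,T]$. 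Adding and taking the supremum over $t$, $\|u\|_{\cX_T}\le C\bigl(\eps+(A\eps)^{1+2/d}+(A\eps)^3\bigr)$; since the exponents $1+\tfrac2d$ and $3$ both exceed $1$, the nonlinear terms are superlinear in $\eps$, so choosing $A=4C$ and then $\eps$ small enough that $C(A\eps)^{1+2/d}+C(A\eps)^3\le\tfrac A4\eps$ forces $\|u\|_{\cX_T}\le C\eps+\tfrac A4\eps\le\tfrac A2\eps$, a strict improvement.

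Since $T\mapsto\|u\|_{\cX_T}$ is continuous, nondecreasing on $[1,T_\ast)$, and starts below $A\eps$ at $T=1$, the improvement shows it can never reach $A\eps$; hence $\|u\|_{\cX_T}\lesssim\eps$ for all $T<T_\ast$. In particular $\|u(t)\|_{H^{\gamma_d}}\lesssim\eps\,t^{\eps^2}$ stays finite on bounded intervals, so the blowup alternative of \Cref{T:localexistence} forces $T_\ast=\infty$, and uniqueness in $C_t^0 H_x^{\gamma_d,\gamma_d}$ is inherited from the local theory. For the pointwise decay, \eqref{E:LinftytoX} gives $\|u(t)\|_{L^\infty}\lesssim t^{-d/2}\|u\|_{\cX_t}\lesssim\eps\,t^{-d/2}$ for $t\ge1$, while $\|u(t)\|_{L^\infty}\lesssim\|u(t)\|_{H^{\gamma_d}}\lesssim\eps$ for $t\in[0,1]$ by Sobolev embedding and the local theory; combining, $\|u(t)\|_{L^\infty}\lesssim\eps(1+|t|)^{-d/2}\le\eps(1+|t|)^{-1/2}$ for all $t\ge0$.

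I expect the main obstacle to be the closure of the energy half of the bootstrap: one must check that the slowly growing weight built into $\|\cdot\|_{\cE}$ genuinely dominates the growth rate permitted by the Gr\"onwall inequality behind \Cref{P:higherdimEnorm}, so that $\|u(t)\|_{\cE}$ remains $\bigO(\eps)$ uniformly in $t$ with no residual $\eps$-dependent loss. This is precisely where the range $\tfrac d2<\gamma_d<1+\tfrac2d$ from \eqref{E:gammad_defn} is essential --- $\gamma_d<1+\tfrac2d$ keeps the fractional chain rule available for the scattering-critical nonlinearity $|u|^{2/d}u$, and $\gamma_d>\tfrac d2$ powers the Sobolev embeddings and the bound \eqref{E:LinftytoX} --- and where smallness of $\eps$ is used twice, once to keep the extra $\tfrac1{50}$ of time decay in \Cref{P:higherdimDnorm} alive against the $\bigO(\eps^2)$ shift of its exponents, and once to absorb the superlinear terms. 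Continuity of the $\cX$-functional, the blowup alternative, and uniqueness are all handed over directly by the local theory.
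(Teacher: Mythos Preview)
Your strategy is exactly what the paper has in mind --- it offers no details beyond ``by a continuity argument, the local theory from above, and the previous two lemmas'' --- so the overall scheme matches. However, the energy half of your bootstrap does not close as written, and the failure is precisely the obstacle you flag in your last paragraph. You assert that because $t^{-\eps^2}\int_1^t s^{-1+\eps^2}\,\d s$ does not grow in $t$, \Cref{P:higherdimEnorm} yields $\|u(t)\|_{\cE}\lesssim\eps+(A\eps)^{1+2/d}$. But a direct computation gives
\[
t^{-\eps^2}\int_1^t s^{-1+\eps^2}\,\d s=\frac{1-t^{-\eps^2}}{\eps^2}\le\frac{1}{\eps^2},
\]
which is uniformly bounded in $t$ but carries an $\eps^{-2}$ constant. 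The correct output of the proposition under your hypothesis is therefore $\|u(t)\|_{\cE}\lesssim\eps+\eps^{-2}(A\eps)^{1+2/d}$, which is $\eps+A^2$ when $d=2$ and $\eps+A^{5/3}\eps^{-1/3}$ when $d=3$. Neither can be absorbed into $\tfrac{A}{2}\eps$ by taking $\eps$ small, so the improvement step fails.

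Bypassing the proposition and applying Gr\"onwall directly does not rescue the argument either: under $\|u\|_{\cX}\le A\eps$ one has $\|u(s)\|_{L^\infty}^{2/d}\lesssim(A\eps)^{2/d}s^{-1}$ from \eqref{E:LinftytoX}, and Gr\"onwall on $\||J|^{\gamma_d}u\|_{L^2}$ produces growth $t^{C(A\eps)^{2/d}}$; for the weight $t^{\eps^2}$ in $\|\cdot\|_{\cE}$ to absorb this one would need $C(A\eps)^{2/d}\le\eps^2$, which is impossible for $d\ge 2$ and small $\eps$ since $\eps^{2/d}\gg\eps^2$. The difficulty is structural: the exponent in the $\cE$-weight must scale like $\eps^{2/d}$ (with a constant large enough to swallow $CA^{2/d}$) rather than $\eps^2$ for the continuity argument to close. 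Your instinct that this was ``the main obstacle'' was right --- it is a genuine gap, not a routine verification, and as the norms are set up in the paper it cannot be closed without modifying the weight.
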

        \section{Asymptotic Behavior of Solutions}\label{S:AsympBehavior}
            In this section we will write down the explicit asymptotic behavior for \eqref{E:SBP}. In particular, recall that in \Cref{P:higherdimDnorm}, we set 
            \[
              g(t)= B(t)\hat{f}(t) \qtq{with} B(t) = \exp\left(\frac{i}{2}\int_1^t \frac{1}{s} (\K \ast |\hat{f}|^2 - |\hat{f}|^{\frac{2}{d}}) \ds\right).
            \]
            The continuity argument from earlier now yields the estimate 
            \begin{equation}
                \|\partial_t g\|_{L^\infty} \lesssim \eps^{1+\frac{2}{d}}t^{-1-\frac{1}{50}+3\eps^2}
            \end{equation}
            In particular, by the fundamental theorem of calculus and the fact that $L^\infty$ is a Banach space, it follows that 
            \begin{equation}
                \|g(t) - \mathcal{W}_0 \|_{L^\infty} \lesssim \eps^{3}t^{-\frac{1}{50} + 3\eps^2}
            \end{equation}
            for some $\cW_0 \in L^\infty$. This tells us that $|\hat{f}| \to |\cW_0|$ in $L^\infty$. Plugging this into the definition of $B(t)$, we have 
            \begin{equation}
                B(t) = \exp\left(\frac{i}{2} (\K \ast |\cW_0|^2  - |\cW_0|^{\frac{2}{d}})+ \Phi(t)\right)
            \end{equation}
            where $\Phi(t)$ converges to a real-valued limit $\Phi_\infty$ in $L^\infty$, with rate $t^{-\frac{1}{25} + 2(1+\frac{2}{d})\eps^2}$. Note that we are required to choose the \textit{worst} rate of decay, which corresponds to the convolution nonlinearity. Defining \[\cW = \exp(-i\Phi_\infty)\cW_0,\] we find
            \begin{align*}
                \hat{f}(t) &= \exp\left(-\frac{i}{2}(\K \ast |W_0|^2 - |\cW_0|^{\frac{2}{d}}) \log(t)\right)\exp(-i\Phi_\infty)\cW_0 + \mathcal{O}(t^{-\frac{1}{50} + (1+\frac{2}{d})\eps^2})\\
                &= \exp\left(-\frac{i}{2} (\K \ast |\cW|^2 - |\cW|^{\frac{2}{d}}) \log(t)\right)\cW + \mathcal{O}(t^{-\frac{1}{50} + (1+\frac{2}{d})\eps^2}).
            \end{align*}
            Recalling that $u(t) = \M(t) \cD(t) \F \M(t) f(t) = \M(t)\cD(t) \hat{f} + \mathcal{O}(t^{-\frac{d}{2}-\frac{1}{50}+(1+\frac{2}{d})\eps^2})$, we conclude the desired result. 

            \nocite{*}
    \bibliography{bopp-podolsky}
    \bibliographystyle{bjoern_style}

\end{document}